\documentclass[a4paper,reqno,11pt]{amsart}
\usepackage{amsfonts,amssymb,amsmath,amsthm,abstract,color}
\usepackage[mathscr]{euscript}
\usepackage[ps,all,arc,rotate]{xy}
\usepackage[lmargin=1in,rmargin=1in,tmargin=1in,bmargin=1in]{geometry}
\usepackage{fancyhdr}
\usepackage{pb-diagram}
\usepackage{enumitem}

\usepackage{hyperref}
\usepackage{cleveref}

\usepackage{bbm}

\newtheorem{prop}{Proposition}[section]
\newtheorem{lemma}[prop]{Lemma}
\newtheorem{thm}[prop]{Theorem}

\theoremstyle{definition}
\newtheorem{defn}[prop]{Definition}

\newtheorem{rmk}[prop]{Remark}

\newtheorem{notn}[prop]{Notation}


\DeclareMathOperator{\Coh}{Coh} 
\newcommand{\TCoh}{\widetilde{\Coh}}
\DeclareMathOperator{\gr}{gr}

\DeclareMathOperator{\rk}{rk}        
 \DeclareMathOperator{\Sym}{Sym}

\DeclareMathOperator{\id}{id}

\DeclareMathOperator{\End}{End}

\newcommand{\ra}{\rightarrow}

\DeclareMathOperator{\Bun}{Bun}

\DeclareMathOperator*{\Lim}{Lim}
\DeclareMathOperator{\Lis}{Lis}
\DeclareMathOperator{\hLis}{hLis}

\DeclareMathOperator{\Jac}{Jac} 
\DeclareMathOperator{\Hom}{Hom}
\DeclareMathOperator{\Map}{Map}
\DeclareMathOperator{\Mor}{Mor}
\DeclareMathOperator{\Spec}{Spec}

\DeclareMathOperator{\Ext}{Ext}

\DeclareMathOperator{\Stk}{Stk}


\DeclareMathOperator{\Alg}{Alg}

\DeclareMathOperator{\Endspe}{end}
\newcommand{\Sp}{\mathrm{Sp}}
\newcommand{\Spc}{\mathrm{Spc}}
\newcommand{\Set}{\mathrm{Set}}
\DeclareMathOperator{\Pres}{Pr}
\DeclareMathOperator{\Cat}{Cat}

\DeclareMathOperator{\LMod}{LMod}

\def\cC{\mathcal C}\def\cD{\mathcal D}
\def\cE{\mathcal E}

\def\cM{\mathcal M}\def\cN{\mathcal N}

\def\cW{\mathcal W}
\def\cY{\mathcal Y}\def\cZ{\mathcal Z}

\def\AA{\mathbb A}\def\CC{\mathbb C}
\def\GG{\mathbb G}

\def\QQ{\mathbb Q}\def\SS{\mathbb S}

\def\ZZ{\mathbb Z}

\def\fX{\mathfrak X}

 \def\GL{\mathrm{GL}} \def\SL{\mathrm{SL}}
\def\PGL{\mathrm{PGL}}
\def\DM{\mathrm{DM}} \def\hDM{\mathrm{hDM}}     
\def\CH{\mathrm{CH}}  

\makeatletter
\def\@tocline#1#2#3#4#5#6#7{\relax
  \ifnum #1>\c@tocdepth 
  \else
    \par \addpenalty\@secpenalty\addvspace{#2}%
    \begingroup \hyphenpenalty\@M
    \@ifempty{#4}{%
      \@tempdima\csname r@tocindent\number#1\endcsname\relax
    }{%
      \@tempdima#4\relax
    }%
    \parindent\z@ \leftskip#3\relax \advance\leftskip\@tempdima\relax
    \rightskip\@pnumwidth plus4em \parfillskip-\@pnumwidth
    #5\leavevmode\hskip-\@tempdima
      \ifcase #1
       \or\or \hskip 1em \or \hskip 2em \else \hskip 3em \fi%
      #6\nobreak\relax
    \hfill\hbox to\@pnumwidth{\@tocpagenum{#7}}\par
    \nobreak
    \endgroup
  \fi}
\makeatother

\makeatletter
\newsavebox{\@brx}
\newcommand{\llangle}[1][]{\savebox{\@brx}{\(\m@th{#1\langle}\)}%
  \mathopen{\copy\@brx\kern-0.5\wd\@brx\usebox{\@brx}}}
\newcommand{\rrangle}[1][]{\savebox{\@brx}{\(\m@th{#1\rangle}\)}%
  \mathclose{\copy\@brx\kern-0.5\wd\@brx\usebox{\@brx}}}
\makeatother

\title[Motives of stacks of coherent sheaves]{Voevodsky motives of stacks of coherent sheaves on a curve}

\author{Victoria Hoskins and Simon Pepin Lehalleur}

\thanks{S. P. L. is supported by The Netherlands Organisation for Scientific Research (NWO), under project number 613.001.752.}

\begin{document}

\maketitle

\begin{abstract}
We prove formulae for the motives of stacks of coherent sheaves of fixed rank and degree over a smooth projective curve in Voevodsky's triangulated category of mixed motives with rational coefficients.
\end{abstract}

\setcounter{tocdepth}{1}
\tableofcontents

\section{Introduction}

Let $\Coh_{n,d}$ (resp. $\Bun_{n,d}$) denote the moduli stack of rank $n$, degree $d$ coherent sheaves (resp. vector bundles) on a smooth projective geometrically connected curve $C$ of genus $g$ over a field $k$. Cohomological invariants of the stack $\Bun_{n,d}$, as well as the related moduli spaces of (semi)stable vector bundles, have been intensely studied. One of the most classical results is due to Atiyah and Bott \cite{AB}: over $k = \CC$, the K\"{u}nneth components of the Chern classes of the universal bundle over $\Bun_{n,d} \times C$ freely generate the rational Betti cohomology ring of $\Bun_{n,d}$. In \cite{Heinloth_coh}, Heinloth showed the rational (Betti and $\ell$-adic) cohomology of the stack $\Coh_{n,d}$ with $n>0$ is also freely generated by the K\"{u}nneth components of the Chern classes of the universal family of coherent sheaves; the result turns out to be independent of the positive rank, which is possible because the universal sheaf has Chern classes in all cohomological degrees.

In previous work \cite{HPL_formula}, we computed the motive of $\Bun_{n,d}$ in Voevodsky's triangulated category $\DM(k,\QQ)$ of mixed motives over $k$ with $\QQ$-coefficients. In this paper, we build on that computation to describe the motive of $\Coh_{n,d}$.

\begin{thm}\label{main thm}
Suppose that $C(k) \neq \emptyset$; then in $\DM(k,\QQ)$, we have the following isomorphisms.
\begin{enumerate}
\item (Theorem \ref{thm torsion sheaves}) For the stack of torsion sheaves of degree $d$, we have
\[M(\Coh_{0,d}) \simeq  \Sym^d(M(\Coh_{0,1})) \simeq \Sym^d(M(C \times B\GG_m)). \]
\item \cite[Theorem 1.1]{HPL_formula} For the stack of vector bundles of rank $n >0$ and degree $d$, we have
\[ M(\Bun_{n,d}) \simeq M(\Jac(C)) \otimes M(B\GG_m) \otimes \bigotimes_{i=1}^{n-1} Z(C, \QQ\{i\}).\]
\item (Theorem \ref{thm coh positive rank}) For the stack of coherent sheaves of rank $n> 0$, we have
\[ M(\Coh_{n,d}) \simeq M(\Jac(C)) \otimes M(B\GG_m) \otimes \bigotimes_{i=1}^{\infty} Z(C, \QQ\{i\}).\]
\end{enumerate}
Here $Z(C,\QQ\{i\}):=\bigoplus_{j=0}^{\infty} M(\Sym^{j}(C)\otimes \QQ\{ij\}$ is a motivic Zeta function and $\QQ\{i\} := \QQ(i)[2i]$. 
\end{thm}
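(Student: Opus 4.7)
The plan is to prove parts (1) and (3); part (2) is the main theorem of~\cite{HPL_formula}. The crucial geometric input is the torsion filtration of a coherent sheaf, combined with the vanishing $\Ext^1(E, T) = H^1(C, E^\vee \otimes T) = 0$ for $E$ locally free and $T$ torsion (as $E^\vee \otimes T$ is a torsion sheaf).

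\smallskip
\emph{Part (1).} One first identifies $\Coh_{0,1} \simeq C \times B\GG_m$, since a length-one torsion sheaf is a skyscraper $i_{p,*} L$ with $p \in C$ and $L$ a one-dimensional vector space. For general $d$, to prove $M(\Coh_{0,d}) \simeq \Sym^d M(\Coh_{0,1})$, I would stratify along the support morphism $\pi : \Coh_{0,d} \to \Sym^d(C)$: over the reduced locus, the fibre is $(B\GG_m)^d$ and after quotienting by $S_d$ one obtains the generic part of $\Sym^d M(C \times B\GG_m)$. The deeper strata, indexed by partitions of $d$ (Jordan type), must be analysed separately and glued in via Gysin triangles, with their contributions matching the remaining pieces of the symmetric power. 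An alternative is to induct on $d$ using the motivic Hecke operators $\THH$ introduced earlier in the paper, via convolution with $\Coh_{0,1}$.

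\smallskip
\emph{Part (3).} For $F \in \Coh_{n,d}$ with torsion subsheaf $T(F) \subseteq F$ of degree $e \geq 0$, the canonical sequence $0 \to T(F) \to F \to F/T(F) \to 0$ has $T(F) \in \Coh_{0,e}$, $F/T(F) \in \Bun_{n,d-e}$, and splits by the $\Ext^1$-vanishing above. Stratifying by $e$ yields an increasing exhaustion of $\Coh_{n,d}$ by quasi-compact open substacks $\Bun_{n,d} = \Coh_{n,d}^{\leq 0} \subset \Coh_{n,d}^{\leq 1} \subset \cdots$ with locally closed strata $\Coh_{n,d}^{e}$. The forgetful map $F \mapsto (T(F), F/T(F))$ sends each stratum to $\Coh_{0,e} \times \Bun_{n,d-e}$ via a morphism whose $2$-fibre over $(T,E)$ is $B\Hom(E,T) \simeq B\GG_a^{ne}$, using $\dim \Hom(E,T) = \chi(E^\vee \otimes T) = ne$ from Riemann--Roch. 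Since $M(B\GG_a) \simeq \QQ$ in $\DM(k,\QQ)$, this yields
\begin{align*}
M(\Coh_{n,d}^{e}) & \simeq M(\Coh_{0,e}) \otimes M(\Bun_{n,d-e}) \\
& \simeq \Sym^e M(C \times B\GG_m) \otimes M(\Jac C) \otimes M(B\GG_m) \otimes \bigotimes_{i=1}^{n-1} Z(C,\QQ\{i\}),
\end{align*}
after substituting parts (1) and (2).

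\smallskip
To combine the strata, I would use Gysin triangles for the inclusions $\Coh_{n,d}^{\leq e-1} \hookrightarrow \Coh_{n,d}^{\leq e}$. A codimension calculation (using $\dim \Ext^1(T,E) = ne$ by Serre duality, since these deformations strictly decrease the torsion degree and hence exit the stratum) shows that $\Coh_{n,d}^{e}$ has codimension $ne$ in $\Coh_{n,d}^{\leq e}$, so the Gysin cofibre comes with a Tate shift $\QQ\{ne\}$. Passing to the appropriate limit and invoking the generating-function identity
\[ \bigoplus_{e \geq 0} \Sym^e\bigl(M(C \times B\GG_m) \otimes \QQ\{n\}\bigr) \simeq \bigotimes_{i \geq n} Z(C,\QQ\{i\}),\]
which follows from $M(B\GG_m) \simeq \bigoplus_{j \geq 0} \QQ\{j\}$ together with the compatibility of $\Sym^e$ with tensor products of even-weight motives, yields the formula in (3). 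The principal obstacle is the convergence of this colimit and the motivic splitting of the Gysin cofibre sequences, which requires exploiting the Tate-type structure of the contributing motives and the use of rational coefficients; a secondary difficulty is the detailed proof of part (1), particularly the analysis of the Jordan-type strata, most naturally via the motivic Hecke formalism developed earlier in the paper.
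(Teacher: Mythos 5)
Your treatment of Part (3) follows essentially the same route as the paper: torsion stratification by $e = \deg(\mathrm{tor})$, the $\gr$ map to $\Coh_{0,e}\times\Bun_{n,d-e}$ being a vector bundle stack (your $B\Hom(E,T)\simeq B\GG_a^{ne}$, using $\Ext^1(E,T)=0$), Gysin triangles with Tate twist $\QQ\{ne\}$, and the generating-function identity. One correction: the splitting of the Gysin triangles does not come from a ``Tate-type structure'' of the contributing motives --- these motives involve $M(C)$ and $M(\Jac C)$ and are not Tate. The paper instead argues by \emph{purity} (weight zero in Bondarko's Chow weight structure): if both ends of a triangle lie in the subcategory generated under coproducts and summands by motives of smooth projective varieties, then the connecting map vanishes because $\Hom(M(X),M(Y)[1])\cong\mathrm{CH}^d(X\times Y,-1)\otimes\QQ=0$. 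Keeping track of purity through the induction is the precise mechanism, and ``Tate-type'' is not the right invocation here.

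For Part (1), however, there is a genuine gap. You reduce it to stratifying $\Coh_{0,d}\to\Sym^d(C)$ by Jordan type and gluing via Gysin triangles, or to an unspecified motivic Hecke argument, and you acknowledge this as the ``secondary difficulty'' without resolving it. The paper does something quite different and cleaner: following Laumon, it introduces the stack $\TCoh_{0,d}$ of full flags $0=T_0\subset T_1\subset\cdots\subset T_d$ of torsion sheaves. The forgetful map $\TCoh_{0,d}\to\Coh_{0,d}$ is proper, small, and an $S_d$-torsor over the dense open locus of reduced support; this is precisely the setting of the small-maps-of-stacks result developed in Section 2, which gives $M(\Coh_{0,d})\simeq M(\TCoh_{0,d})^{S_d}$. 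Separately, the associated-graded map $\TCoh_{0,d}\to\prod_{i=1}^{d}\Coh_{0,1}$ is an iterated vector bundle stack, so $M(\TCoh_{0,d})\simeq M(\Coh_{0,1})^{\otimes d}$, and taking $S_d$-invariants yields $\Sym^d M(\Coh_{0,1})$. The two inputs you are missing are exactly (a) the flag stack and its smallness (Laumon's theorem), and (b) the extension of the de Cataldo--Migliorini/HPL small-map theorem to algebraic stacks (Theorem 2.7 in the paper), which is the technical heart of Section 2 and requires showing that the endomorphism space of the relative motive $f_!f^!\QQ$ over a stack is discrete. A direct Jordan-type stratification of $\Coh_{0,d}$ would face substantial difficulties (the deeper strata are not simply products, and their contributions do not obviously assemble into $\Sym^d$); the flag resolution plus small-map theorem avoids these entirely.
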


The infinite tensor product in the third statement should be interpreted as a colimit over $m\geq 1$ of the finite tensor products for $1\leq i\leq m$, with the transition maps given by the morphisms $\QQ\{0\}\to Z(C,\QQ\{m+1\})$ corresponding to the inclusion of the zeroth summand.

In particular, these decompositions imply corresponding decompositions on Chow groups and $\ell$-adic cohomology. Moreover, we note that all these motives are pure (in the sense of Definition \ref{def:pure}) and lie in the localising tensor subcategory of $\DM(k,\QQ)$ generated by the motive of the curve $C$. On the level of cohomology, these results were obtained by Laumon \cite{Laumon}, Atiyah-Bott \cite{AB} and Heinloth \cite{Heinloth_coh} and our arguments draw heavily on ideas from these papers. 

One of the key technical ingredients required is a motivic description of small maps of smooth stacks $f : \fX \ra \cY$ which are generically torsors under a finite group $G$; this requires us to work with motives with $\QQ$-coefficients in order to define the $G$-invariant piece. On the level of cohomology such results were used by Laumon \cite{Laumon} and Heinloth \cite{Heinloth_LaumonBDay}. Chow motives of small maps were investigated by de Cataldo and Migliorini \cite{dCM_small} and we used their work to describe Voevodsky motives of small maps of varieties (see \cite[Theorem 2.13]{HPL_formula}) in our description of the motive of $\Bun_{n,d}$. In this paper, we extend \cite[Theorem 2.13]{HPL_formula} to stacks (see Theorem \ref{thm short small maps stacks}). The main challenge is to show that the $G$-action coming from the description as a torsor on a dense open extends to the motive of $\fX$. For this, we work with relative motives over stacks and adopt an $\infty$-categorical approach to categories of motivic sheaves over stacks similarly to \cite{khan-stacks}. Fortunately to define an action on the relative motive $f_!f^!\QQ_{\cY}$ over $\cY$, we can avoid discussing homotopically coherent group actions, as the endomorphism space of this motive turns out to be discrete. 

We apply Theorem \ref{thm short small maps stacks} in the proof of Theorem \ref{thm torsion sheaves} and also in our recent paper \cite{HPL_mirrorsym} on motivic mirror symmetry for Higgs bundles for dual Langlands groups $\SL_n$ and $\PGL_n$, in order to prove the motive of the moduli space of semistable tracefree Higgs bundles of fixed determinant of degree coprime to the rank is abelian (see \cite[Theorem 1.2]{HPL_mirrorsym}). 

Moduli stacks of vector bundles and coherent sheaves play an important role in the geometric Langlands correspondence for $\GL_{n}$ \cite{Laumon} and (cohomological) Hall algebras of curves \cite{Schiffmann_survey,SSala,SV_preprint}. In future work, we plan to use Theorem \ref{main thm} to construct and study Voevodsky motivic Hall algebras for coherent sheaves on curves, building on the cohomological version in \cite{SSala,SV_preprint}.


\subsection*{Conventions and motivic set-up}

\subsubsection*{Stacks}
By a stack, we mean an algebraic stack which is locally of finite type over a field $k$.

\subsubsection*{$\infty$-categories}

An $\infty$-category is a quasicategory as in \cite{HTT,HA}. We let $\Cat_{\infty}$ denote the $\infty$-category of small $\infty$-categories, $\widehat{\Cat}_{\infty}$ denote the $\infty$-category of large $\infty$-categories, and $\Pres^{L}$ denote the $\infty$-category of presentable $\infty$-categories and left-exact functors. We let $\Spc$ denote the $\infty$-category of spaces (i.e. $\infty$-groupoids) and let $\Sp$ denote the $\infty$-category of spectra. 

\subsubsection*{Categories of motives}

We fix a base field $k$ and as we are working with $\QQ$-coefficients, we work with categories of \'{e}tale motivic sheaves. For this paper, we need to work not only with motives \emph{of} algebraic stacks in $\DM(k,\QQ)$, but also with motivic sheaves \emph{on} algebraic stacks, i.e. with categories $\DM(\fX,\QQ)$ where $\fX$ is an algebraic stack (locally of finite type over the base field $k$). For this purpose, we adopt the formalism of \cite[Appendix A]{khan-stacks}, adapted to the setting of $\DM(-,\QQ)$ (see \cite[Appendix A.1]{HPL_mirrorsym} for a summary of this set-up in the context of $\DM(-,\QQ)$). This requires working systematically with stable $\infty$-categories rather than triangulated categories. We denote the triangulated homotopy category of $\DM(-,\QQ)$ by $\hDM(-,\QQ)$. For a stack $\cY$, we will repeatedly use the description of $\DM(\cY,\QQ)$ given in \cite[Eq.\ (A.4)]{khan-stacks}:
\[
\DM(\cY,\QQ)\simeq \Lim_{u:Y\to \cY\in \Lis_{\cY}} \DM(Y,\QQ)
\]
where $\Lis_{\cY}$ is the full $2$-subcategory of the category $\Stk_{\cY}$ of stacks over $\cY$ consisting of objects which are smooth and quasi-projective over $k$, the transition maps in this limit are given by ordinary pullbacks and the limit is taken in $\Pres^{L}$. The fact that the limit can be taken with only quasi-projective varieties is not stated in loc. cit. but follows from Zariski descent.

There is a full six-operation formalism for $\DM(-,\QQ)$ with respect to morphisms of algebraic stacks \cite{khan-stacks}. In particular, if $f:\fX\to \cY$ is a locally of finite type morphism between algebraic stacks, there is an adjunction
\[
f_{!}:\DM(\fX,\QQ)\leftrightarrows \DM(\cY,\QQ):f^{!}.
\]
If $\fX$ is locally of finite type over $k$ with structure morphism $\pi_{\fX}:\fX\to \Spec(k)$, we can thus define the motive of $\fX$ as
\[
M(\fX):=\pi_{\fX!}\pi_{\fX}^{!}\QQ_{\fX}.
\]
Since $\DM(\fX,\QQ)$ is defined by an étale descent procedure, this motive can also be written as an homotopy colimit of the motive of the simplicial algebraic space attached to an atlas; if $\fX$ admits is smooth and admits an atlas which is a scheme, we see that we recover up to isomorphism the construction of étale motives of stacks used in our previous papers \cite[Appendix A]{HPL} and \cite{HPL_formula}.

\section{Small maps between stacks and motives}\label{sec small maps}

In this section, we extend a result on motives of small maps given in \cite[Theorem 2.13]{HPL_formula} based on \cite{dCM_small} from schemes to stacks; for the definition of a small map of stacks, see \cite[Definition 2.4]{HPL_formula}. Our aim is to prove the following result, which is Theorem \ref{thm small maps stacks} \emph{\ref{iso-base}} below.

\begin{thm}\label{thm short small maps stacks}
 Let $f : \fX \ra \cY$ be a small proper surjective representable morphism of smooth stacks such that there exists a dense open $\cY^\circ \subset \cY$ with preimage $\fX^\circ$ such that $f^{\circ}: \fX^\circ \ra \cY^\circ$ is a $G$-torsor. Then the $G$-action on $M(\fX^\circ)$ extends to a $G$-action on $M(\fX)\in \hDM(k,\QQ)$ and there is an isomorphism
  \[ M(\fX)^G \simeq M(\cY).\]
\end{thm}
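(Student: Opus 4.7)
The plan is to work with the relative motive $\cM := f_! f^! \QQ_\cY \in \DM(\cY,\QQ)$ instead of the absolute motive $M(\fX)$ directly. Since $\cY$ is smooth of dimension $d_\cY$, one has $\pi_\cY^!\QQ \simeq \QQ_\cY\{d_\cY\}$, so $M(\fX) \simeq \pi_{\cY,!}\cM\{d_\cY\}$ and $M(\cY) \simeq \pi_{\cY,!}\QQ_\cY\{d_\cY\}$, and $\pi_{\cY,!}$ commutes with the idempotent $e_G := \frac{1}{|G|}\sum_g g$ that cuts out $G$-invariants. Thus it is enough to construct a $G$-action on $\cM$ extending the tautological action on $\cM|_{\cY^\circ} = f^\circ_! f^{\circ!}\QQ_{\cY^\circ} \simeq \bigoplus_{g\in G}\QQ_{\cY^\circ}$ (the regular representation) and to prove $\cM^G \simeq \QQ_\cY$.

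To construct and analyze the action I would use the Lis-descent presentation $\DM(\cY,\QQ) \simeq \Lim_{u:Y\to\cY\in\Lis_\cY}\DM(Y,\QQ)$ recalled in the introduction. For each $u: Y\to\cY$ with $Y$ smooth quasi-projective, the base change $f_Y: \fX_Y \to Y$ inherits all the hypotheses of the theorem (small, proper, surjective, representable, a $G$-torsor over the dense open $Y^\circ := u^{-1}(\cY^\circ)$), and $\fX_Y$ is a smooth algebraic space, which may be replaced by a scheme after an étale refinement if necessary. The scheme-level result \cite[Theorem 2.13]{HPL_formula}, together with its proof via the de Cataldo--Migliorini decomposition, delivers a canonical $G$-action on $\cM_Y := f_{Y,!}f_Y^!\QQ_Y$ with $\cM_Y^G \simeq \QQ_Y$, and moreover identifies $\End_{\hDM(Y,\QQ)}(\cM_Y) \simeq \QQ[G]$ as a discrete ring via restriction to $Y^\circ$.

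The key subtlety is assembling these level-wise actions into a $G$-action on $\cM$ itself. For a morphism $v: Y'\to Y$ in $\Lis_\cY$, smooth and proper base change provide an equivalence $v^*\cM_Y \simeq \cM_{Y'}$, and compatibility of the $G$-actions with $v^*$ is needed. Because $\End_{\hDM(Y,\QQ)}(\cM_Y)$ is discrete, a $G$-action is just an ordinary group homomorphism $G\to \Aut_{\hDM(Y,\QQ)}(\cM_Y)$, and compatibility with $v^*$ is a set-level assertion, not a homotopy-coherent one. Since the scheme-level action is uniquely characterized by restriction to the dense open where it is the tautological torsor action (and this restriction is manifestly natural in $Y$), compatibility holds automatically. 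This is the precise sense in which discreteness lets us avoid homotopy-coherent group actions, as highlighted in the introduction.

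Having a compatible system of discrete $G$-actions on the $\cM_Y$, I would then obtain a $G$-action on $\cM$ in $\DM(\cY,\QQ)$ via the Lis-descent limit. Its invariants satisfy $\cM^G \simeq \Lim_u (\cM_Y)^G \simeq \Lim_u \QQ_Y \simeq \QQ_\cY$ (taking invariants by $e_G$ commutes with the limit since it is an idempotent retract), and pushing forward by $\pi_{\cY,!}$ and applying the twist $\{d_\cY\}$ yields the desired isomorphism $M(\fX)^G \simeq M(\cY)$ in $\hDM(k,\QQ)$. The main obstacle in this plan is the gluing step just described: extending the strictly defined scheme-level $G$-actions to the $\infty$-categorical relative motive on the stack $\cY$. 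The smallness hypothesis on $f$ is precisely what forces $\End(\cM_Y)$ to be discrete, via the decomposition theorem, thereby sidestepping any need for homotopy-coherent machinery.
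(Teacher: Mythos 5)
Your strategy is essentially the same as the paper's: work with the relative homological motive $f_!f^!\QQ_\cY$ over $\cY$, use the Lis-descent presentation $\DM(\cY,\QQ)\simeq \Lim_{Y\to\cY}\DM(Y,\QQ)$ to reduce to the scheme case from \cite[Theorem 2.13]{HPL_formula}, use discreteness of the endomorphism mapping space to bypass homotopy-coherent group actions, and then push forward to $\Spec(k)$. The paper constructs the $G$-action directly on the stack $\cY$ by transporting the tautological action on $\cY^\circ$ along the restriction isomorphism $j^*\colon\End_{\DM(\cY)}(f_!f^!\QQ_\cY)\simeq\End_{\DM(\cY^\circ)}(j^*f_!f^!\QQ_\cY)$ (Proposition \ref{prop restr iso}), whereas you propose to build the action levelwise over each $Y\in\Lis_\cY$ and glue; these are morally the same, and the paper's proof of Proposition \ref{prop restr iso} in the stack case in fact proceeds via the same limit diagram.

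However, there is a genuine gap in your proposal: you treat the discreteness of the endomorphism mapping space $\End_{\DM(Y,\QQ)}(f_{Y!}f_Y^!\QQ_Y)$ as something that \cite[Theorem 2.13]{HPL_formula} already provides. It does not. What \cite[Proposition 2.10]{HPL_formula} gives is a bijection on $\pi_0$, i.e.\ an isomorphism of endomorphism \emph{sets} in the homotopy category $\hDM$; it says nothing about the negative Ext-groups $\Ext^{-n}(f_!f^!\QQ_Y,f_!f^!\QQ_Y)$ for $n>0$, which are exactly the higher homotopy groups of the mapping space. Vanishing of these is the key new technical input of the present argument — it is what allows the 1-categorical, set-level gluing you describe — and it needs its own proof. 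The paper establishes it in Proposition \ref{prop:end-discrete} by identifying these groups with $\CH^0(X\times_Y X, n)\otimes\QQ$, the weight-$n$ codimension-$0$ higher Chow groups of the (equidimensional, by smallness) fibre product, and observing that these vanish for $n>0$. Without this step, the claim that a $G$-action on $f_!f^!\QQ_\cY$ is merely a monoid homomorphism $G\to\End_{\hDM}$, and that the levelwise actions glue through the Lis-limit via Lemma \ref{lemma:lim-discrete}, has no justification.

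Two smaller points. First, the assertion $\End_{\hDM(Y,\QQ)}(\cM_Y)\simeq\QQ[G]$ is not correct in general — over the dense open where $f^\circ$ is a $G$-torsor, $f^\circ_!f^{\circ!}\QQ$ is $f^\circ_*\QQ_{X^\circ}$, whose endomorphism ring depends on the connectivity and monodromy of the torsor (e.g.\ for a split torsor over connected $Y^\circ$ it is a matrix algebra, not $\QQ[G]$). Fortunately the argument never uses this identification, only the discreteness and the restriction isomorphism. Second, your parenthetical remark about replacing $\fX_Y$ by a scheme after an étale refinement points at a real but minor issue (the base change of a representable morphism to a scheme is a priori an algebraic space); it is handled by noting that one can take $Y$ quasi-projective in the Lis-descent presentation and that the scheme-level statement is Zariski-local, but you should be explicit if you want to invoke \cite[Theorem 2.13]{HPL_formula} verbatim.
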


In the scheme case, the key ingredient in the proof was \cite[Proposition 2.10]{HPL_formula}: for a small proper surjective morphism $f:X\to Y$ of smooth schemes with base change $f^{\circ}:X^{\circ}\to Y^{\circ}$ to a dense open subscheme $j : Y^{\circ} \hookrightarrow Y$, the morphism
\[
j^{*}:\End_{\hDM(Y)}(f_{!}f^{!}\QQ_{Y})\to \End_{\hDM(Y^{\circ})}(f^{\circ}_{!}f^{\circ!}\QQ_{Y^{\circ}})
\]
is a bijection of sets. When $f^{\circ}$ is a $G$-torsor, this allows us to extend the $G$-action on $f^{\circ}_{!}f^{\circ!}\QQ_{Y^{\circ}}$ to $f_{!}f^{!}\QQ_{Y}$ and to ultimately cut out the motive of $Y$ as a direct summand of the motive of $X$.

To prove the same result for stacks, we proceed by descent from the scheme case. A priori, this requires studying the full endomorphism mapping space $\End_{\DM(Y)}(f_{!}f^{!}\QQ_{Y})\in \Spc$ in the $\infty$-category $\DM(Y,\QQ)$, and to contend with some homotopy coherence problems for group actions in $\infty$-categories. However, it turns out that this mapping space is actually discrete\footnote{That is, all its higher homotopy groups are trivial.}, which allows us to sidestep these issues.

\subsection{Endomorphisms objects in (stable) $\infty$-categories}

Let $\cC^{\otimes}$ be a monoidal $\infty$-category and $\cM$ be an $\infty$-category enriched over $\cC^{\otimes}$ in the sense of \cite[Definition 4.2.1.28]{HA}. If $\cC^{\otimes}=\Spc^{\times}$ is the $\infty$-category of spaces with its cartesian monoidal structure, then every presentable $\infty$-category is canonically enriched over $\Spc^{\times}$ by \cite[\S 4.4.4]{HTT} and \cite[Proposition 4.2.1.33]{HA}. Similarly, if $\cC^{\otimes}=\Sp^{\otimes}$ is the $\infty$-category of spectra equipped with the smash product, then every presentable stable $\infty$-category is canonically enriched over $\Sp^{\otimes}$ by \cite[Remark 4.8.2.20, Proposition 4.2.1.33]{HA}. These are the only examples we will need in this paper.

By definition, this set-up implies that $\cM$ is left-tensored over $\cC^{\otimes}$, so that one has a functor
\begin{equation}\label{action functor}\tag{$\star$}
\otimes:\cC^{\otimes}\times \cM\to \cM
\end{equation}
and that given two objects $M,M'\in \cM$, there is a \emph{morphism object}
\[
\Mor_{\cM}^{\cC^\otimes}(M,M')\in\cC^{\otimes}
\]
together with a morphism $\alpha:\Mor_{\cM}^{\cC^\otimes}(M,M')\otimes M\to M'$ in $\cM$ with the universal property that for any object $C\in\cC^{\otimes}$, the composition
\[
\Map_{\cC}(C,\Mor^{\cC^\otimes}_{\cM}(M,M'))\stackrel{-\otimes \id_{M}}{\to}\Map_{\cM}(C\otimes M,\Mor^{\cC^{\otimes}}_{\cM}(M,M')\otimes M)\stackrel{\alpha^{*}}{\to}\Map_{\cM}(C\otimes M,M')
\]
is an isomorphism in $\Spc$. When $M=M'$, we write
\[
\End_{\cM}^{\cC^\otimes}(M):=\Mor_{\cM}^{\cC^\otimes}(M,M)\in \cC^{\otimes}.
\]

\begin{rmk}
By analogy with $1$-category theory, one expects that composition of morphisms induces an associative algebra structure on $\End_{\cM}^{\cC^\otimes}(M)$, i.e. an object in $\Alg(\cC^{\otimes})$. It should also be the case that $\End_{\cM}^{\cC^\otimes}(M)$ acts on $M$ and is in fact the universal object in $\Alg(\cC^{\otimes})$ equipped with an action on $M$, in the sense that for any object $A\in \Alg(\cC^{\otimes})$, there is an isomorphism
\[
\Map_{\Alg(\cC^{\otimes})}(A,\End_{\cM}(M))\simeq {A}\times_{\Alg(\cC^{\otimes})}\LMod(\cM)\times_{\cM}{M}
\]
in $\Spc$, where $\LMod(\cM)$ is the $\infty$-category whose objects are triples $(B,M',\bullet)$ with $B\in \Alg(\cC^{\otimes})$, $M'\in\cM$ and $\bullet :B\otimes M'\to M'$ an action of $B$ on $M'$. The above sketch is realised and made precise and functorial in \cite[\S 4.7.1 and \S 4.8.5]{HA} in the presentable setting. Moreover, the discussion on functoriality and limits below are compatible with these algebra structures. However we will not need to know about these homotopy-coherent compositions because in our application the endomorphism spaces turn out to be discrete, and we are reduced to usual monoids in $\Set$.
\end{rmk}

\begin{notn}\
\begin{enumerate}
\item For an object $M$ in a presentable $\infty$-category $\cM$ (which is canonically left-tensored over $\Spc^{\times}$), the above construction produces an endomorphism space denoted
\[
\End_{\cM}(M):=\End^{\Spc^{\times}}_{\cM}(M)\in\Spc.
\]
\item For an object $M$ in a presentable stable $\infty$-category $\cM$ (which is canonically left-tensored over $\Sp^{\otimes}$), the above construction produces an endomorphism spectrum denoted
\[
\Endspe_{\cM}(M):=\End_{\cM}^{\Sp^{\otimes}}(M)\in \Sp.
\]
\end{enumerate}
\end{notn}

\begin{lemma}\label{lemma:pi-endspe}
For an object $M$ in a presentable stable $\infty$-category $\cM$ and $n\in \ZZ$,
    \[
\pi_{n}\Endspe_{\cM}(M)\simeq \Ext^{-n}_{h\cM}(M,M)
\]
can be computed as an Ext-group in the triangulated homotopy category $h\cM$ of $\cM$.
\end{lemma}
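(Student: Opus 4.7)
The plan is to unwind the universal property of $\Endspe_{\cM}(M)$ directly, apply it to the sphere spectrum shifted by $n$, and identify the resulting space-level homotopy group with the $\Ext$-group in $h\cM$.

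First I would use the defining universal property of the morphism object in the $\Sp^{\otimes}$-enriched $\infty$-category $\cM$: for every spectrum $E\in \Sp$, there is a natural isomorphism
\[
\Map_{\Sp}(E,\Endspe_{\cM}(M))\simeq \Map_{\cM}(E\otimes M,M)
\]
in $\Spc$, where $\otimes:\Sp\times \cM\to \cM$ denotes the tensoring of \eqref{action functor}. Specialising to $E=\SS^{n}$ (the $n$-sphere spectrum) and taking $\pi_0$ gives
\[
\pi_{n}\Endspe_{\cM}(M)=\pi_{0}\Map_{\Sp}(\SS^{n},\Endspe_{\cM}(M))\simeq \pi_{0}\Map_{\cM}(\SS^{n}\otimes M,M).
\]

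Next I would identify $\SS^{n}\otimes M$ with the $n$-fold shift $M[n]$. This relies on the standard fact that the canonical left-tensoring of a presentable stable $\infty$-category over $\Sp^{\otimes}$, which comes from the universal property of $\Sp$ as the initial presentable stable $\infty$-category (see \cite[Corollary 4.8.2.19]{HA}), sends $\SS^{1}$ to the suspension functor $\Sigma=[1]$, and more generally $\SS^{n}$ to $[n]$. This is essentially a unicity statement for the $\Sp$-module structure on $\cM$; the monoidal functor $\Sp\to \End(\cM)$ classifying the tensoring is the unique colimit-preserving such functor, and since the shift functor $[1]:\cM\to \cM$ is an equivalence on a stable $\infty$-category, the formula $\SS^{n}\otimes M\simeq M[n]$ is forced.

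Combining these two steps with the definition $\Ext^{k}_{h\cM}(X,Y):=\Hom_{h\cM}(X,Y[k])=\pi_{0}\Map_{\cM}(X,Y[k])$ of $\Ext$-groups in the triangulated homotopy category, I obtain
\[
\pi_{n}\Endspe_{\cM}(M)\simeq \pi_{0}\Map_{\cM}(M[n],M)=\Hom_{h\cM}(M[n],M)=\Ext^{-n}_{h\cM}(M,M),
\]
as required. There is no real obstacle here: the only non-formal input is the identification $\SS^{n}\otimes M\simeq M[n]$, whose citation to \cite[\S 4.8.2]{HA} is the main thing to get right; everything else is an application of the universal property of the endomorphism object and the definition of homotopy groups of spectra.
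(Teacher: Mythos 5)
Your proof is correct and follows essentially the same route as the paper: apply the universal property of the $\Sp^{\otimes}$-enriched morphism object to $\Sigma^{n}\SS$, identify $\Sigma^{n}\SS\otimes M$ with $M[n]$, and pass to $\pi_{0}$. The paper states this as a one-line chain of isomorphisms; your version just spells out the intermediate steps (in particular the identification $\SS^{n}\otimes M\simeq M[n]$ via the canonical $\Sp$-tensoring) in more detail.
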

\begin{proof}
We have isomorphisms
  \[
\pi_{n}\Endspe_{\cM}(M)\simeq \Map_{\Sp}(\Sigma^{n}\SS,\Endspe_{\cM}(M))\simeq \Map_{\cM}(\Sigma^{n}\SS\otimes M,M)\simeq \Map_{h\cM}(M[n],M)
 \]
using the universal property of morphism objects.
\end{proof}  

\begin{rmk} One can also show that the ring structure on $\pi_{*}\Endspe_{\cM}(M)$ induced by composition is given by composition in the triangulated category $h\cM$ (see \cite[Remark 7.1.2.2]{HA} for a discussion of this point), but we will not need this.
\end{rmk}

Let us explain how the endomorphism space and spectra are related by the infinite suspension functor. In general, let $F:\cC^{\otimes}\to \cD^{\otimes}$ be a (strong) monoidal functor between presentable monoidal $\infty$-categories, $\cM$ be a presentable $\infty$-category enriched over both $\cC^{\otimes}$ and $\cD^{\otimes}$ compatibly with $F$, and $M\in\cM$ be an object. Using the universal property of endomorphism objects, we obtain an induced morphism
\[
F_{*}:\End_{\cM}^{\cC^{\otimes}}(M)\to F(\End_{\cM}^{\cD^{\otimes}}(M))
\]
in $\cD^{\otimes}$. We now apply this to the monoidal functor of infinite suspension \cite[Example 6.2.4.13]{HA}
\[
F=\Sigma^{\infty}_{+}:\Spc^{\times}\to \Sp^{\otimes}.
\]
Let $\cM$ be a stable presentable $\infty$-category. By the above discussion, $\cM$ is canonically enriched over both $\Spc^{\times}$ and $\Sp^{\otimes}$, and the two enrichments are compatible because the actions \eqref{action functor} are (this is essentially contained in \cite[Proposition 4.8.2.18]{HA}). Hence, for any object $M\in \cM$, we get a morphism
\[
(\Sigma^{\infty}_{+})_{*}:\Sigma^{\infty}_{+}\End_{\cM}(M)\to \Endspe_{\cM}(M)
\]
in $\Sp$. The functor $\Sigma^{\infty}_{+}$ has a right adjoint $\Omega^{\infty}:\Sp\to \Spc$, so we get a morphism
\[
\Omega^{\infty}:\End_{\cM}(M)\to \Omega^{\infty}\Endspe_{\cM}(M)
\]
in $\Spc$.
\begin{lemma}\label{lemma:omega-infty}
With the above notation, the morphism
  \[
\Omega^{\infty}:\End_{\cM}(M)\to \Omega^{\infty}\Endspe_{\cM}(M)
\]
is an isomorphism, and for all $n\geq 0$, we have
\[
\pi_{n}\End_{\cM}(M)\simeq \Ext^{-n}_{h\cM}(M,M).
\]
\end{lemma}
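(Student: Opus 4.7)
The plan is to prove the first assertion (that $\Omega^{\infty}$ is an isomorphism) by comparing universal properties of the two morphism objects, and then to deduce the second assertion as an immediate corollary combined with \Cref{lemma:pi-endspe}.

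For the first statement, I would test both sides against arbitrary $X\in\Spc$. On the right-hand side, the $(\Sigma^{\infty}_{+}\dashv\Omega^{\infty})$-adjunction together with the universal property of the endomorphism spectrum gives
\[
\Map_{\Spc}(X,\Omega^{\infty}\Endspe_{\cM}(M))\simeq \Map_{\Sp}(\Sigma^{\infty}_{+}X,\Endspe_{\cM}(M))\simeq \Map_{\cM}(\Sigma^{\infty}_{+}X\otimes_{\Sp} M, M),
\]
where the subscript records that the tensor uses the $\Sp^{\otimes}$-enrichment on $\cM$. Compatibility of the $\Spc^{\times}$- and $\Sp^{\otimes}$-enrichments on $\cM$ via the monoidal functor $\Sigma^{\infty}_{+}$ (see \cite[Proposition 4.8.2.18]{HA}) identifies $\Sigma^{\infty}_{+}X\otimes_{\Sp}M$ with $X\otimes_{\Spc}M$, so this last mapping space becomes $\Map_{\cM}(X\otimes_{\Spc}M, M)$, which by the universal property of the endomorphism space equals $\Map_{\Spc}(X,\End_{\cM}(M))$. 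Naturality in $X$ together with the definition of $\Omega^{\infty}$ as the adjoint of $(\Sigma^{\infty}_{+})_{*}$ shows that this chain of isomorphisms is represented by the map $\Omega^{\infty}$ in the statement, hence Yoneda in $\Spc$ gives the desired isomorphism.

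For the second statement, once $\Omega^{\infty}\colon \End_{\cM}(M)\to\Omega^{\infty}\Endspe_{\cM}(M)$ is known to be an isomorphism, applying $\pi_{n}$ for $n\geq 0$ yields $\pi_{n}\End_{\cM}(M)\simeq \pi_{n}\Endspe_{\cM}(M)$ (the nonnegative homotopy groups of the underlying space of a spectrum agree with the homotopy groups of the spectrum). \Cref{lemma:pi-endspe} then identifies the right-hand side with $\Ext^{-n}_{h\cM}(M,M)$.

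The only delicate point is the compatibility of the two enrichments, i.e.\ the identification $\Sigma^{\infty}_{+}X\otimes_{\Sp}M\simeq X\otimes_{\Spc}M$ in $\cM$. This is not a formal consequence of the definitions but is exactly what is encoded by the fact that both action functors \eqref{action functor} are part of the canonical presentable-enrichment structures and correspond to one another under the monoidal functor $\Sigma^{\infty}_{+}$; this is precisely the setting in which the induced morphism $F_{*}$ on morphism objects was defined above. Everything else in the proof is formal manipulation of adjunctions and universal properties.
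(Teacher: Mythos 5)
Your proof is correct and follows essentially the same approach as the paper: comparing both sides against arbitrary $X\in\Spc$ via the $(\Sigma^{\infty}_{+}\dashv\Omega^{\infty})$-adjunction and the universal properties of the morphism objects, reducing to the compatibility $\Sigma^{\infty}_{+}X\otimes M\simeq X\otimes M$ of the two enrichments, and deducing the second statement from Lemma~\ref{lemma:pi-endspe}. No meaningful differences.
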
  
\begin{proof}
By the Yoneda Lemma and the universal property of morphism objects above, for any $K\in \Spc$, we have
\[
\Map_{\Spc}(K,\End_{\cM}(M))\simeq \Map_{\cM}(K\otimes M,M)
\]
and
\[
\Map_{\Spc}(K,\Omega^{\infty}\Endspe_{\cM}(M))\simeq \Map_{\Sp}(\Sigma^{\infty}_{+}K,\Endspe_{\cM}(M))\simeq \Map_{\cM}(\Sigma^{\infty}_{+}K\otimes M,M).
\]
As we already mentioned before, the two actions are compatible: $\Sigma^{\infty}_{+}K\otimes M\simeq K\otimes M$ functorially in $K$, and this concludes the proof of the first statement.

The second statement follows from the first together with Lemma \ref{lemma:pi-endspe}.
\end{proof}  

Finally, we describe how these endomorphism objects interact with functors and with limits of diagrams of $\infty$-categories.

Let $F:\cM\to \cN$ be a left-adjoint functor between presentable $\infty$-categories and $M\in \cM$ be an object. Then by the universal property, we get an induced morphism
\[
F_{*}:\End_{\cM}(M)\to \End_{\cM}(F(M))
\]
in $\Spc$.

\begin{lemma}\label{lemma lim end}
Let $I$ be a small $\infty$-category and $\cM_{*}:I\to\Pres^{L}$ be a diagram of presentable $\infty$-categories and left-adjoint functors. For an object $M$ in $\Lim_{i\in I}\cM_{i}$ with projections $M_{i}\in \cM_{i}$, there is an isomorphism
  \[
\End_{\Lim_{i\in I}\cM_{i}}(M)\simeq \Lim_{i\in I}\End_{\cM_{i}}(M_{i})
\]
in $\Spc$.
\end{lemma}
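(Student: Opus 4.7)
The plan is to verify the isomorphism using the Yoneda lemma and the defining universal property of the endomorphism object as spelled out in the text. First I would test both sides against an arbitrary space $K\in \Spc$. By the universal property of $\End_{\Lim_{i\in I}\cM_i}(M)$ recalled in the excerpt, we have
\[
\Map_{\Spc}(K,\End_{\Lim_{i\in I}\cM_i}(M))\simeq \Map_{\Lim_{i\in I}\cM_i}(K\otimes M, M).
\]
On the other side, because limits of spaces commute with mapping out of a fixed space,
\[
\Map_{\Spc}\bigl(K,\Lim_{i\in I}\End_{\cM_i}(M_i)\bigr)\simeq \Lim_{i\in I}\Map_{\Spc}(K,\End_{\cM_i}(M_i))\simeq \Lim_{i\in I}\Map_{\cM_i}(K\otimes M_i,M_i),
\]
where in the last step we again use the universal property of $\End_{\cM_i}(M_i)$. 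So the statement will reduce to producing a natural isomorphism
\[
\Map_{\Lim_{i\in I}\cM_i}(K\otimes M, M)\simeq \Lim_{i\in I}\Map_{\cM_i}(K\otimes M_i,M_i).
\]

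Next I would identify the mapping space in the limit. The limit in $\Pres^{L}$ agrees with the limit taken in $\widehat{\Cat}_{\infty}$ (this is \cite[Proposition 5.5.3.13, Theorem 5.5.3.18]{HTT}); consequently, mapping spaces in $\Lim_{i\in I}\cM_i$ are computed as limits of the mapping spaces in the factors, so that
\[
\Map_{\Lim_{i\in I}\cM_i}(N, N')\simeq \Lim_{i\in I}\Map_{\cM_i}(N_i,N_i')
\]
for any pair of objects $N,N'$ with projections $N_i,N_i'$. Applying this to $N=K\otimes M$ and $N'=M$, the desired formula will follow provided that the projection of $K\otimes M$ to $\cM_i$ is naturally identified with $K\otimes M_i$. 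This is the one point that requires some care, and is the step I expect to be the main obstacle.

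To settle it, recall that the canonical enrichment of a presentable $\infty$-category over $\Spc^{\times}$ comes from the tensoring functor $\otimes:\Spc\times \cM\to\cM$ characterised by the adjunction
\[
\Map_{\cM}(K\otimes N, N')\simeq \Map_{\Spc}(K,\Map_{\cM}(N,N')),
\]
and that for a fixed $K\in \Spc$ the functor $K\otimes -:\cM\to\cM$ is a left adjoint and preserves all colimits. Any transition functor $F_{ij}:\cM_i\to \cM_j$ in the diagram lies in $\Pres^{L}$, hence is colimit-preserving and in particular commutes with tensoring by spaces: $F_{ij}(K\otimes -)\simeq K\otimes F_{ij}(-)$ via the canonical map. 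By the same reasoning, the projection $\pr_i:\Lim_{i\in I}\cM_i\to \cM_i$ is a left adjoint (this is how the limit in $\Pres^{L}$ is assembled), so $\pr_i(K\otimes M)\simeq K\otimes \pr_i(M)=K\otimes M_i$, naturally in $i$.

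Combining these steps yields a chain of natural isomorphisms
\[
\Map_{\Spc}\bigl(K,\End_{\Lim_{i\in I}\cM_i}(M)\bigr)\simeq \Lim_{i\in I}\Map_{\cM_i}(K\otimes M_i,M_i)\simeq \Map_{\Spc}\bigl(K,\Lim_{i\in I}\End_{\cM_i}(M_i)\bigr),
\]
natural in $K\in\Spc$. The Yoneda lemma then gives the required isomorphism $\End_{\Lim_{i\in I}\cM_i}(M)\simeq \Lim_{i\in I}\End_{\cM_i}(M_i)$ in $\Spc$.
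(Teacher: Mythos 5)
Your proof is correct and takes essentially the same route as the paper's: both hinge on the facts that the limit in $\Pres^L$ coincides with the limit in $\widehat{\Cat}_\infty$ (HTT 5.5.3.13) and that mapping spaces in a limit of $\infty$-categories are limits of the mapping spaces (HTT A.3.2.27), which under the identification $\End_{\cM}(M)\simeq\Map_{\cM}(M,M)$ gives the claim. You simply make explicit, via the Yoneda lemma and the observation that the projections $\pr_i$ lie in $\Pres^L$ and hence preserve the colimit $K\otimes M=\colim_K M$, the compatibilities that the paper leaves implicit behind the phrase ``standard result about limits of $\infty$-categories.''
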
  
\begin{proof}
By \cite[Proposition 5.5.3.13]{HTT}, the limit of $\cM_{*}$ is the same whether taken in $\Pr^{L}$ or in $\widehat{\Cat}_{\infty}$. Thus it suffices to prove the result in $\widehat{\Cat}_{\infty}$, which is then a standard result about limits of $\infty$-categories. For instance, using the equivalence between the quasicategory model and the simplicial category model of $\infty$-categories, it can be deduced from \cite[Proposition A.3.2.27]{HTT}. 
\end{proof}

\subsection{Motives of small maps of algebraic stacks}

In order to prove that the relevant endomorphism space is discrete, we first make the following observation about smooth base change for relative homology.

\begin{lemma}\label{lemma:base-change}
  Let $f:\fX\ra \cY$ be a separated finite type morphism of algebraic stacks. For a smooth morphism $u:\cZ\to \cY$ of stacks, consider the base change
\[ \xymatrix{ \fX \times_{\cY} \cZ \ar[r]^{\tilde{u}} \ar[d]_{f_{\cZ}} & \fX \ar[d]^{f} \\ \cZ \ar[r]^{u} & \cY. }\] 
\begin{enumerate}[label=\emph{\roman*)}]
\item There is an isomorphism
  \[
i_{u}:u^{*}f_{!}f^{!}\QQ_{\cY}\simeq f_{Z!}f_{Z}^{!}\QQ_{\cZ}.
\]
\item The isomorphism in \emph{i)} is ($1$-categorically) functorial in the following sense: given another smooth morphism $v:\cW\to \cZ$, we have $i_{v} \circ v^*i_{u} = i_{u \circ v}$ in $\hDM(\cW,\QQ)$.
\end{enumerate}
\end{lemma}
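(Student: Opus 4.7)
The plan is to construct $i_{u}$ as a composition of two standard isomorphisms from the six-functor formalism of \cite{khan-stacks} (smooth/proper base change and purity), and then to verify the functoriality statement ii) by invoking the $1$-categorical compatibilities of these isomorphisms with composition of morphisms.

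For part i), I would proceed as follows. First, base change for $(-)_{!}$ in the six-functor formalism for $\DM(-,\QQ)$ on stacks gives a natural isomorphism
\[
\beta_{u,f}:u^{*}f_{!}\simeq f_{\cZ!}\tilde{u}^{*},
\]
which is valid for arbitrary $u$. Second, since $u$ is smooth of some (locally constant) relative dimension $d$, so is $\tilde{u}$, with the same relative dimension. The purity identifications $u^{!}\simeq u^{*}(d)[2d]$ and $\tilde{u}^{!}\simeq \tilde{u}^{*}(d)[2d]$, combined with the equality of functors $(f\tilde{u})^{!}=(uf_{\cZ})^{!}$ obtained from the commutativity of the square together with the pseudofunctoriality of $(-)^{!}$, yield a natural isomorphism
\[
\gamma_{u,f}:\tilde{u}^{*}f^{!}\simeq f_{\cZ}^{!}u^{*}
\]
in which the Tate twists cancel. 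Applying $\gamma_{u,f}$ to $\QQ_{\cY}$ and composing with $\beta_{u,f}$ gives the required isomorphism
\[
i_{u}:u^{*}f_{!}f^{!}\QQ_{\cY}\ \stackrel{\beta_{u,f}}{\simeq}\ f_{\cZ!}\tilde{u}^{*}f^{!}\QQ_{\cY}\ \stackrel{\gamma_{u,f}}{\simeq}\ f_{\cZ!}f_{\cZ}^{!}u^{*}\QQ_{\cY}\ =\ f_{\cZ!}f_{\cZ}^{!}\QQ_{\cZ}.
\]

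For part ii), the identity $i_{v}\circ v^{*}i_{u}=i_{u\circ v}$ in $\hDM(\cW,\QQ)$ decomposes, after unfolding the definitions and rearranging via the base-change-of-base-change, into two separate compatibilities: the first says that the $\beta$-isomorphism is compatible with composition of smooth morphisms in the horizontal direction, namely $\beta_{v,f_{\cZ}}\circ v^{*}\beta_{u,f}=\beta_{u\circ v,f}$, and the second says the analogous identity for $\gamma$. Both are standard compatibilities in the six-functor formalism: the first is the horizontal pasting law for base change, and the second follows from the compatibility of the purity isomorphisms $u^{!}\simeq u^{*}(d)[2d]$ with composition of smooth morphisms, together with the pseudofunctoriality of $(-)^{!}$. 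Since we only need an equality in the homotopy category $\hDM(\cW,\QQ)$, no higher coherence has to be tracked and these $1$-categorical statements (established for algebraic stacks in \cite{khan-stacks}) are enough.

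The only mild obstacle is bookkeeping: one must check that the two cancelling Tate twists in $\gamma_{u,f}$ are compatible with the corresponding cancellations for $\gamma_{v,f_{\cZ}}$ and $\gamma_{u\circ v,f}$. This is automatic from the additivity of relative dimension along the composition $u\circ v$ and the compatibility of the purity isomorphism with composition, so no additional computation is required beyond citing the relevant properties of the formalism.
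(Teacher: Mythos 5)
Your proof is correct and follows essentially the same route as the paper: a base change isomorphism $u^{*}f_{!}\simeq f_{\cZ!}\tilde{u}^{*}$ followed by relative purity for $\tilde{u}$ and $u$ combined with the pseudofunctoriality identity $(f\tilde{u})^{!}=(uf_{\cZ})^{!}$, with the Tate twists cancelling. You give more detail on part ii) (which the paper dismisses as an easy consequence of the construction), and you package the second half of the chain as a natural transformation $\gamma_{u,f}:\tilde{u}^{*}f^{!}\simeq f_{\cZ}^{!}u^{*}$, but the underlying argument and the ingredients from the six-functor formalism of \cite{khan-stacks} are the same.
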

\begin{proof}
  By working componentwise on $\cZ$, we can assume that $u$ has a fixed relative dimension $d$. For i), we have
\[  u^{*}f_{!}f^{!}\QQ_{\cY}\simeq  (f_{Z})_{!}\tilde{u}^{*}f^{!}\QQ_{\cY} \simeq  (f_{Z})_{!}\tilde{u}^{!}f^{!}\QQ_{\cY}\{-d\} \simeq  (f_{Z})_{!}f_{Z}^{!}u^{!}\QQ_{\cY}\{-d\} \simeq  (f_{Z})_{!}f_{Z}^{!}\QQ_{\cY}\]  
by base change and relative purity \cite[Appendix A]{khan-stacks}. The functoriality stated in ii) is an easy consequence of this construction.
\end{proof}  

We now present the key result which makes the situation manageable.

\begin{prop}\label{prop:end-discrete}
  Let $f : \fX \ra \cY$ be a small proper surjective representable morphism of smooth stacks. Then the space $\End_{\DM(\cY)}(f_{!}f^{!}\QQ_{\cY})$ is discrete.
\end{prop}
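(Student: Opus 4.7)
The plan is to prove discreteness by reducing from stacks to smooth schemes via smooth descent, then from schemes to the dense open on which $f$ is finite étale via a localization triangle, and finally invoking the endomorphism computation from \cite[Proposition 2.10]{HPL_formula}.

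\textbf{Reduction to schemes.} By Lemma \ref{lemma:omega-infty}, discreteness of $\End_{\DM(\cY)}(f_!f^!\QQ_\cY)$ amounts to the vanishing of $\Ext^{-n}_{\hDM(\cY)}(f_!f^!\QQ_\cY,f_!f^!\QQ_\cY)$ for all $n\geq 1$. The smooth descent description of $\DM(\cY,\QQ)$, combined with Lemma \ref{lemma lim end} and Lemma \ref{lemma:base-change}, yields
\[
\End_{\DM(\cY)}(f_!f^!\QQ_\cY)\simeq \Lim_{u:Y\to \cY\in \Lis_\cY}\End_{\DM(Y)}(f_{Y!}f_Y^!\QQ_Y),
\]
where $f_Y:\fX_Y:=\fX\times_\cY Y\to Y$ is the base change. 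Representability of $f$ ensures $\fX_Y$ is a scheme; smoothness of $\fX$ together with smoothness of $u$ makes $\fX_Y$ smooth; and $f_Y$ remains small, proper and surjective. Since $0$-truncated objects are stable under limits in $\Spc$, the statement reduces to the case where $f:X\to Y$ is a small proper surjective morphism between smooth quasi-projective varieties.

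\textbf{Localization on the scheme.} In the scheme case, let $j:Y^\circ\hookrightarrow Y$ be a dense open on which $f^\circ:X^\circ\to Y^\circ$ is finite étale, and let $i:Z\hookrightarrow Y$ be its reduced complement, of codimension $\geq 2$ by smallness. Set $M:=f_!f^!\QQ_Y$. The localization triangle $j_!j^*M\to M\to i_*i^*M$, combined with the adjunctions $j_!\dashv j^*$ and $i_*\dashv i^!$, yields a fiber sequence in $\Spc$:
\[
\Map_{\DM(Z)}(i^*M,i^!M)\to \End_{\DM(Y)}(M)\to \End_{\DM(Y^\circ)}(j^*M).
\]
By base change, $j^*M\simeq f^\circ_!f^{\circ!}\QQ_{Y^\circ}\simeq f^\circ_*\QQ_{X^\circ}$ since $f^\circ$ is finite étale, and using $f^{\circ *}\dashv f^\circ_*$ together with the $G$-torsor decomposition $f^{\circ *}f^\circ_*\QQ_{X^\circ}\simeq \bigoplus_{g\in G}\QQ_{X^\circ}$, we compute
\[
\pi_n\End_{\DM(Y^\circ)}(j^*M)\simeq \bigoplus_{g\in G}H^{-n}(X^\circ,\QQ(0))=0\quad\text{for }n\geq 1,
\]
since motivic cohomology of a smooth variety vanishes in negative degrees. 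Hence the base of the fiber sequence is discrete.

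\textbf{The fiber and main obstacle.} By the long exact sequence of homotopy groups, it remains to prove $\pi_n\Map_{\DM(Z)}(i^*M,i^!M)=0$ for all $n\geq 1$. This is the main technical obstacle: for $n=0$ it is essentially the content of \cite[Proposition 2.10]{HPL_formula}, whose proof exploits base change and relative purity for the closed immersion $\tilde{i}:f^{-1}(Z)\hookrightarrow X$---which has codimension $\geq 2$ because $f$ is small and $\dim X=\dim Y$---to produce a Tate shift large enough to force vanishing on $\Ext^0$. I expect the same argument to extend verbatim to all $n\geq 0$, since the codimension $\geq 2$ bound produces cohomological shifts of magnitude $\geq 4$ which are enough to annihilate motivic cohomology $H^{-n}$ of the relevant smooth varieties for all $n\geq 0$. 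The representability hypothesis on $f$ is essential at the descent step, as it guarantees that the base changes $\fX_Y$ are schemes and thereby unlocks the scheme-theoretic result from \cite{HPL_formula}.
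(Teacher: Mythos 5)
Your reduction from stacks to smooth quasi‑projective schemes via the limit description of $\DM(\cY,\QQ)$, Lemma \ref{lemma lim end}, Lemma \ref{lemma:base-change}, and closure of discrete objects under limits in $\Spc$ is exactly what the paper does. The divergence comes after that, and the divergent part has genuine gaps.

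First, the localization approach rests on the existence of a dense open $Y^\circ\subset Y$ over which $f$ is finite \emph{\'etale}; this is not a hypothesis of Proposition \ref{prop:end-discrete}. Smallness gives a dense open where $f$ is quasi‑finite, hence finite (being proper), but not automatically \'etale. In positive characteristic this fails outright: a relative Frobenius of a smooth variety is small, proper, surjective and representable, but nowhere \'etale. Even granting char~$0$, there is a separate issue with the codimension estimate: smallness controls the codimension of the locus of positive‑dimensional fibres (codim $>2$), but says nothing about the ramification divisor, which may well have codimension $1$. So if $Y^\circ$ is taken to be the finite‑\'etale locus, $Z$ can have codimension $1$, and the argument ``codimension $\geq 2$ produces enough of a Tate shift to kill $H^{-n}$'' does not apply; if instead $Y^\circ$ is the quasi‑finite locus (so $\mathrm{codim}\, Z\geq 3$), your computation of $\End_{\DM(Y^\circ)}(j^*M)$ via $f^\circ_*\QQ\simeq\bigoplus_G\QQ$ is no longer justified. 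Finally, the ``main obstacle'' section is explicitly left as an expectation, not a proof, and the nonsmoothness of $Z$ and $f^{-1}(Z)$ means the purity/Tate‑shift argument you want to invoke over $Z$ is not a verbatim extension of anything in \cite{HPL_formula}.

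The paper avoids all of this by proving the scheme case directly, with no reference to a dense open or to \'etaleness. Using duality it identifies
\[
\Hom_{\DM(Y,\QQ)}\bigl(f_!f^!\QQ_Y,\,f_!f^!\QQ_Y[-n]\bigr)\;\simeq\;\Hom_{\DM(k,\QQ)}\bigl(\QQ(d)[2d+n],\,M_c(X\times_YX)\bigr)
\]
(the same computation as in \cite[Proposition 2.10]{HPL_formula}), which by \cite[Corollary 8.12]{CD_int} is $\CH^0(X\times_YX,n)\otimes\QQ$. Smallness is used only to ensure $X\times_YX$ is equidimensional of dimension $d=\dim Y$. But $\CH^0(-,n)$ is the $n$‑th homology of the codimension‑zero cycle complex, and on an equidimensional variety the codimension‑zero cycles on $(X\times_YX)\times_k\Delta^i$ are simply the irreducible components crossed with $\Delta^i$, so this complex is constant in $i$ and has no higher homology. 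This single dimension count kills $\pi_n$ for all $n>0$ at once, without needing a stratification of $Y$, an \'etaleness assumption, or any purity argument on a singular closed substack. I would suggest replacing the localization step by this direct computation.
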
  
\begin{proof}
  We first prove the result in the case where $\cY=Y$ is a smooth quasi-projective variety over $k$. In other words, it suffices to show that for all small proper surjective morphism $f:X\to Y$ between smooth varieties over $k$ with $Y$ quasi-projective and all $n>0$, we have
  \[
\pi_n\End_{\DM(Y,\QQ)}(f_{!}f^{!}\QQ_{Y})=0.
\]
By Lemma \ref{lemma:omega-infty}, this is equivalent to showing that
\[
\Hom_{\DM(Y,\QQ)}(f_{!}f^{!}\QQ_{Y},f_{!}f^{!}\QQ_{Y}[-n])=0.
\]
To prove this, we can apply Zariski descent and Lemma \ref{lemma:base-change} i) to the open inclusion of connected components to reduce to the case where $Y$ is connected of dimension $d$. By \cite[Lemma 2.7]{HPL_formula}, the (possibly singular) variety $X\times_YX$ is equidimensional of dimension $d$ . By the same computation as in the proof of \cite[Proposition 2.10]{HPL_formula}, we have
\[
\Hom_{\DM(Y,\QQ)}(f_{!}f^{!}\QQ_{Y},f_{!}f^{!}\QQ_{Y}[-n])\simeq \Hom_{\DM(k,\QQ)}(\QQ(d)[2d+n],M_c(X\times_YX)).
\]
By \cite[Corollary 8.12]{CD_int}, we have
\[
\Hom_{\DM(k,\QQ)}(\QQ(d)[2d+n],M_c(X\times_YX))\simeq \CH^0(X\times_YX,n)\otimes\QQ,
\]
where $\CH^0(X\times_YX,n)$ denotes Bloch's higher Chow group of cycles of codimension $0$ and weight $n$. By definition, $\CH^0(X\times_YX,n)=H_n(z^0(X\times_YX,*))$ where $z^0(X\times_YX,i)$ is the free abelian group generated by the codimension $0$ integral subvarieties of $(X\times_YX)\times_k\Delta^i$ \cite[Definition 17.1]{MVW}, with the additional condition of proper intersection being automatic in this case. Since the codimension $0$ integral subvarieties of $(X\times_YX)\times_k\Delta^i$ are just the fibre products of the irreducible components of $X\times_YX$ with $\Delta^i$, we see that we indeed have $\CH^0(X\times_YX,n)=0$ for $n>0$ (and $\CH^0(X\times_YX,0)=Z_d(X\times_YX)=\CH_d(X\times_YX)$). This finishes the proof in this case.

We now turn to the general case. As in \cite[Appendix A]{khan-stacks}, we have 
\[
\DM(\cY,\QQ)\simeq \Lim_{u:Y\to \cY\in \Lis_{\cY}} \DM(Y,\QQ).
\]
Hence by Lemma \ref{lemma lim end}, we deduce
that
\[
  \End_{\DM(\cY,\QQ)}(f_{!}f^{!}\QQ_{\cY}) \simeq  \Lim_{u:Y\to \cY\in \Lis_{\cY}} \End_{\DM(Y,\QQ)}(u^{*}f_{!}f^{!}\QQ_{\cY}) 
\]
in $\Spc$. By Lemma \ref{lemma:base-change} i), we have $u^{*}f_{!}f^{!}\QQ_{\cY}\simeq f_{Y_!}f_Y^{!}\QQ_Y$ where $f_Y$ is the base change of $f$ along $u : Y \ra \cY$. From the case of quasi-projective smooth varieties above, we know that $\End_{\DM(Y,\QQ)}(f_{Y_!}f_Y^{!}\QQ_Y)$ is discrete. A limit of discrete spaces is discrete (as $\pi_{0}:\Spc\to \Set$ is the left adjoint of the inclusion $\Set\to \Spc$), so we conclude that $\End_{\DM(\cY,\QQ)}(f_{!}f^{!}\QQ_{\cY})$ is discrete.
\end{proof}  

This discreteness enables us to use the following standard lemma comparing $\infty$-categorical and $1$-categorical limits.

\begin{lemma}\label{lemma:lim-discrete}
  Let $I$ be a small $\infty$-category and let $F:I\to\Spc$ be a diagram of spaces. Assume that for all $i\in I$, the space $F(i)$ is discrete. Then $\Lim F$ is discrete and its $\pi_{0}$ can be described as follows: the functor $\pi_{0}F:I\to \Set$ factors essentially uniquely through the homotopy category $hI$, giving a functor $\bar{F}:hI\to \Set$, and we have
  \[
\pi_{0}\Lim F\simeq \Lim \bar{F}
\]
in $\Set$.
\end{lemma}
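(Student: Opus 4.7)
The plan is to decompose the lemma into the two claims---discreteness of $\Lim F$ and the formula $\pi_{0}\Lim F \simeq \Lim \bar{F}$---and reduce both to standard properties of truncated objects in $\Spc$ combined with the universal property of the homotopy category $hI$.

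For discreteness, I would invoke the fact that the full subcategory $\Set \hookrightarrow \Spc$ of $0$-truncated spaces is closed under arbitrary limits in $\Spc$; this is a special case of the closure of $n$-truncated objects under limits in any presentable $\infty$-category (see \cite[Proposition 5.5.6.5]{HTT}). Since each $F(i)$ is discrete by hypothesis, $\Lim F$ is discrete as well, and thus $\Lim F \simeq \pi_{0}\Lim F$ canonically in $\Spc$.

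For the factorization $\bar{F}:hI\to \Set$, I would use the adjunction between the homotopy category functor $h:\Cat_{\infty}\to \Cat_{1}$ and the nerve $N:\Cat_{1}\to\Cat_{\infty}$. Since $\Set$ (viewed inside $\Cat_{\infty}$) has discrete mapping spaces, every functor $I\to \Set$ in $\Cat_{\infty}$ factors essentially uniquely through $hI$; essential uniqueness is automatic here because all higher cells in $N\Set$ are degenerate. For the comparison of limits, I would then chain the natural equivalences
\[
\Lim_{I} F \simeq \Map_{\mathrm{Fun}(I,\Spc)}(\underline{\ast}, F)\simeq \Map_{\mathrm{Fun}(I,\Set)}(\underline{\ast}, F)\simeq \Map_{\mathrm{Fun}(hI,\Set)}(\underline{\ast}, \bar{F})\simeq \Lim_{hI} \bar{F},
\]
where the second identity uses that both $F$ and the constant functor $\underline{\ast}$ factor through $\Set$ (and that $\Set \hookrightarrow \Spc$ preserves mapping spaces between $0$-truncated objects), the third is the adjunction applied at the level of functor categories, and the last is the usual $1$-categorical description of a limit in $\Set$.

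I do not expect a serious obstacle; each step is a direct application of a well-documented $\infty$-categorical fact. The only real content is the observation that, in the presence of discrete mapping spaces in the target, the higher homotopy coherences that distinguish functors out of $I$ from functors out of $hI$ all trivialise, so the $\infty$-categorical limit collapses to an ordinary $1$-categorical one in $\Set$.
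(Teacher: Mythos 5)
Your argument is correct. The paper itself does not give a self-contained proof: it simply cites Kerodon Tag 02JD, which is precisely the statement that limits of diagrams of discrete spaces are discrete and are computed by the $1$-categorical limit over the homotopy category. Your write-up supplies the content behind that citation, and it is sound: the closure of $0$-truncated objects under limits (\cite[Proposition 5.5.6.5]{HTT}) gives discreteness; the factorization $\bar{F}:hI\to\Set$ is the image of $F$ under the equivalence $\mathrm{Fun}(I,N\Set)\simeq N(\mathrm{Fun}(hI,\Set))$ coming from the adjunction $h\dashv N$; and the chain of identifications of $\Lim_I F$ first as $\Map_{\mathrm{Fun}(I,\Spc)}(\underline{\ast},F)$, then as a mapping space in the full subcategory $\mathrm{Fun}(I,N\Set)$, then as a hom-set in $\mathrm{Fun}(hI,\Set)$, and finally as $\Lim_{hI}\bar{F}$ is valid, each step being a standard piece of $\infty$-category theory. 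One small stylistic point: rather than saying $\Set\hookrightarrow\Spc$ "preserves mapping spaces between $0$-truncated objects", it is cleaner to just observe that it is a full subcategory inclusion and that fullness is inherited by $\mathrm{Fun}(I,-)$; and the "essential uniqueness is automatic" remark is better justified by appealing directly to the equivalence of functor categories rather than to the degeneracy of higher simplices of $N\Set$, though the latter is in fact what drives it. None of this affects correctness; your proof is a fine substitute for the external reference.
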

\begin{proof}
This is a special case of \cite[\href{https://kerodon.net/tag/02JD}{Tag 02JD}]{kerodon}.
\end{proof}  

We can now prove an extension of \cite[Proposition 2.10]{HPL_formula} to stacks.

\begin{prop}\label{prop restr iso}
  Let $f : \fX \ra \cY$ be a small proper surjective representable morphism of smooth stacks. Let $j : \cY^\circ \hookrightarrow \cY$ be the inclusion of a dense open subset and  $f^{\circ}:\fX^\circ\to\cY^\circ$ denote the base change of $f$ via $j$. Then the natural map of mapping spaces
  \[
j^{*}:\End_{\DM(\cY)}(f_{!}f^{!}\QQ_{\cY})\to \End_{\DM(\cY^{\circ})}(j^{*}f_{!}f^{!}\QQ_{\cY})
\]
is an isomorphism of discrete objects in $\Spc$.
\end{prop}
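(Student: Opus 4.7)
The plan is to combine the scheme case (Proposition 2.10 of \cite{HPL_formula}) with the limit presentation of $\DM$ on stacks, using the discreteness result of Proposition \ref{prop:end-discrete} to stay in the $1$-categorical world and thus bypass any higher-categorical bookkeeping.

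First, I would verify that both mapping spaces are discrete. The source is directly covered by Proposition \ref{prop:end-discrete}. For the target, Lemma \ref{lemma:base-change} i) applied to the open immersion $j$ identifies $j^{*}f_{!}f^{!}\QQ_{\cY}\simeq f^{\circ}_{!}(f^{\circ})^{!}\QQ_{\cY^{\circ}}$, and $f^{\circ}$ is again small, proper, surjective and representable between smooth stacks as the base change of $f$ along a smooth morphism, so Proposition \ref{prop:end-discrete} applies again. It therefore suffices to show that $\pi_{0}j^{*}$ is a bijection of sets.

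Combining Lemma \ref{lemma lim end} with Lemma \ref{lemma:lim-discrete}, both $\pi_{0}$'s are expressed as $1$-categorical limits over homotopy categories of lisse sites:
\[
\pi_{0}\End_{\DM(\cY)}(f_{!}f^{!}\QQ_{\cY})\simeq\Lim_{u:Y\to\cY\,\in\, h\Lis_{\cY}}\pi_{0}\End_{\DM(Y)}(f_{Y!}f_{Y}^{!}\QQ_{Y}),
\]
with $f_{Y}$ the base change of $f$ along $u$, and analogously for $\cY^{\circ}$. Using the compatibility of Lemma \ref{lemma:base-change} ii), $\pi_{0}j^{*}$ is identified with restriction of compatible systems along the fully faithful functor $F:h\Lis_{\cY^{\circ}}\hookrightarrow h\Lis_{\cY}$ given by composition with $j$. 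The scheme case enters as follows: for any $u:Y\to\cY$ in $\Lis_{\cY}$, the open subscheme $Y^{\circ}:=u^{-1}(\cY^{\circ})$ is dense in $Y$ (by smoothness of $u$) and $f_{Y}:X_{Y}\to Y$ is a small proper surjective morphism of smooth quasi-projective varieties, so \cite[Proposition 2.10]{HPL_formula} yields a bijection
\[
\pi_{0}\End_{\DM(Y)}(f_{Y!}f_{Y}^{!}\QQ_{Y})\xrightarrow{\sim}\pi_{0}\End_{\DM(Y^{\circ})}(f_{Y^{\circ}!}f_{Y^{\circ}}^{!}\QQ_{Y^{\circ}}).
\]

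With this in hand, injectivity of $\pi_{0}j^{*}$ is immediate: a compatible family $(\phi_{u})$ is determined at each $u$ by $\phi_{u}|_{Y^{\circ}}$ via the scheme-case bijection, and by compatibility along the morphism $(Y^{\circ},j\circ u^{\circ})\to(Y,u)$ in $\Lis_{\cY}$ (where $u^{\circ}:Y^{\circ}\to\cY^{\circ}$ denotes the pullback of $u$), this restriction equals $\phi_{F(u^{\circ})}$. For surjectivity, given a compatible family $(\psi_{v})_{v\in h\Lis_{\cY^{\circ}}}$, I would define $\phi_{u}$ to be the unique lift of $\psi_{u^{\circ}}$ under the scheme-case bijection. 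The main obstacle, and the only step requiring care, will be verifying compatibility along an arbitrary morphism $w:(Y',u')\to(Y,u)$ in $\Lis_{\cY}$: both $w^{*}\phi_{u}$ and $\phi_{u'}$ have restriction to $Y'^{\circ}$ equal to $(w^{\circ})^{*}\psi_{u^{\circ}}=\psi_{(u')^{\circ}}$, the first equality following from the strict $1$-categorical naturality of Lemma \ref{lemma:base-change} ii) applied to the pair $Y'^{\circ}\hookrightarrow Y'$ and $Y^{\circ}\hookrightarrow Y$, and the second from compatibility of $(\psi_{v})$ along $w^{\circ}$; hence $w^{*}\phi_{u}=\phi_{u'}$ by the scheme-case bijection for $Y'$.
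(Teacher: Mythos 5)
Your proposal is correct and follows essentially the same strategy as the paper: both first establish discreteness of the two endomorphism spaces via Proposition \ref{prop:end-discrete}, then use Lemma \ref{lemma lim end} and Lemma \ref{lemma:lim-discrete} to reduce to a $1$-categorical limit over $h\Lis$, identify the per-scheme terms via Lemma \ref{lemma:base-change}, and feed in \cite[Proposition 2.10]{HPL_formula} pointwise. The only real difference is in the reindexing step between $h\Lis_{\cY}$ and $h\Lis_{\cY^{\circ}}$: the paper invokes the adjunction between post-composition with $j$ and base change $-\times_{\cY}\cY^{\circ}$ to conclude that the base change functor is initial for limits, while you establish bijectivity of $\pi_{0}j^{*}$ by a direct check of injectivity and surjectivity of compatible families (using the dense open $Y^{\circ}=u^{-1}(\cY^{\circ})\subset Y$ and the $1$-categorical naturality of Lemma \ref{lemma:base-change}~\emph{ii)}). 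Your hands-on verification is an unwinding of the same cofinality fact and buys nothing new, but it is a perfectly valid and perhaps more self-contained alternative; the paper's version is shorter once one accepts the standard adjunction-implies-initial lemma.
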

\begin{proof}

  We first prove the result in the case where $\cY=Y$ is a smooth quasi-projective variety over $k$. By Lemma \ref{lemma:base-change} i), it is enough to show the same statement for the morphism
  \[
j^{*}:\End_{\DM(Y)}(f_{!}f^{!}\QQ_{Y})\to \End_{\DM(Y^{\circ})}(f^{\circ}_{!}f^{\circ !}\QQ_{Y^{\circ}})
\]
induced by $j$ and the functoriality of relative homological motives. In \cite[Proposition 2.10]{HPL_formula}, we proved this result at the level of $\pi_0$. To complete the proof, it suffices to show that the source and the target of $j^*$ are discrete, which follows from Proposition \ref{prop:end-discrete}.
  
  We now turn to the general case. We again know that the source and target are discrete by Proposition \ref{prop:end-discrete}. As in \cite[Appendix A]{khan-stacks}, we have 
\[
\DM(\cY,\QQ)\simeq \Lim_{u:Y\to \cY\in \Lis_{\cY}} \DM(Y,\QQ)
\]
where the transition maps are given by ordinary pullbacks. We claim that the functor $j^{*}$ is an isomorphism because it factors as the following sequence of isomorphisms
\begin{eqnarray}
  \End_{\DM(\cY,\QQ)}(f_{!}f^{!}\QQ_{\cY}) &\simeq & \Lim_{u:Y\to \cY\in \Lis_{\cY}} \End_{\DM(Y,\QQ)}(u^{*}f_{!}f^{!}\QQ_{\cY})\\
     &\simeq& \Lim_{u:Y\to \cY\in \hLis_{\cY}}\End_{\hDM(Y,\QQ)}(u^{*}f_{!}f^{!}\QQ_{\cY})\\
  &\simeq& \Lim_{u:Y\to \cY\in \hLis_{\cY}}\End_{\hDM(Y^{\circ},\QQ)}(j_{Y}^{*}u^{*}f_{!}f^{!}\QQ_{\cY})\\
                                     &\simeq & \Lim_{u:Y\to \cY\in \hLis_{\cY}} \End_{\hDM(Y^{\circ},\QQ)}((u^{\circ})^{*}j^{*}f_{!}f^{!}\QQ_{\cY})\\
&\simeq & \Lim_{v:Y^{\circ}\to \cY^{\circ}\in \hLis_{\cY^{\circ}}}\End_{\hDM(Y^{\circ},\QQ)}(v^{*}j^{*}f_{!}f^{!}\QQ_{\cY})\\
&\simeq & \Lim_{v:Y^{\circ}\to \cY^{\circ}\in \Lis_{\cY^{\circ}}}\End_{\DM(Y^{\circ},\QQ)}(v^{*}j^{*}f_{!}f^{!}\QQ_{\cY})\\
  & \simeq & \End_{\DM(\cY^{\circ},\QQ)}(j^{*}f_{!}f^{!}\QQ_{\cY}).
\end{eqnarray}  
Here, Isomorphisms $(1)$ and $(7)$ follow from Lemma \ref{lemma lim end}, Isomorphisms $(2)$ and $(6)$ follow from Lemma \ref{lemma:lim-discrete} and Isomorphism $(3)$ follows from Lemma \ref{lemma:base-change} i) and the case of quasi-projective smooth schemes proved above. Isomorphism $(4)$ follows from the ($1$-categorical) functoriality of pullbacks. Finally, isomorphism $(5)$ follows from the fact that the base change functor
\[
-\times_{\cY}\cY^{\circ}:\hLis_{\cY}\to \hLis_{\cY^{\circ}}
\]
has a left adjoint (given by post-composition by $j$) and thus is initial (in the sense that postcomposing with it does not change the limit of a diagram).

It remains to check that the composition of these isomorphisms coincides with $j^{*}$. Since these are discrete spaces, it suffices to check this for elements, and it then follows easily from the universal properties of endomorphism objects and limits.
\end{proof}

We now have the ingredients to prove the main result of this section.

\begin{thm}\label{thm small maps stacks}
  Let $f : \fX \ra \cY$ be a small proper surjective representable morphism of smooth stacks such that there exists a dense open $\cY^\circ \subset \cY$ with preimage $\fX^\circ$ such that $f^{\circ}: \fX^\circ \ra \cY^\circ$ is a $G$-torsor. Then the following statements hold.
\begin{enumerate}[label=\emph{(\roman*)}]
\item\label{action} The $G$-action on $f^{\circ}_{!}f^{\circ!}\QQ$ in $\hDM(\cY^{\circ},\QQ)$ coming from the $G$-action on $\fX$ extends to an action of $G$ on $f_{!}f^{!}\QQ$ in $\hDM(\cY,\QQ)$, i.e. a morphism
  \[
\alpha_{\fX}:G\to \End_{\hDM(\cY,\QQ)}(f_{!}f^{!}\QQ_{\cY})
\]
of monoids in $\Set$.
\item\label{action-bc} This $G$-action is compatible with smooth base change in the following sense: if $u:\cZ\to\cY$ is a smooth morphism of smooth algebraic stacks, then the morphisms $\alpha_{\fX\times_{\cY}\cZ}$ coincides with the composition
  \[
\quad \quad \quad G\stackrel{\alpha_{\fX}}\longrightarrow \End_{\hDM(\cY,\QQ)}(f_{!}f^{!}\QQ_{\cY})\to \End_{\hDM(\cZ,\QQ)}(u^{*}f_{!}f^{!}\QQ_{\cY})\stackrel{i_u}{\simeq} \End_{\hDM(\cZ,\QQ)}(f_{\cZ_{!}}f_{\cZ}^{!}\QQ_{\cZ}),
  \]
where the isomorphism $i_u$ is the one given by Lemma \ref{lemma:base-change} \emph{i)}.
\item\label{iso-up} The following composition is an isomorphism
  \[
(f_{!}f^{!}\QQ_{\cY})^{G}\to f_{!}f^{!}\QQ_{\cY}\to \QQ_{\cY},
\]
where the first map is induced by the $G$-action $\alpha_{\fX}$ in the $\QQ$-linear category $\hDM(\cY,\QQ)$ and the second map is the counit of the adjunction.
\item\label{iso-base} There is an induced $G$-action on $M(\fX)\in \hDM(k,\QQ)$ and an isomorphism
  \[ M(\fX)^G \simeq M(\cY).\]
\end{enumerate}    
\end{thm}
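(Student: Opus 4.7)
My plan is to use the discreteness result of Proposition \ref{prop:end-discrete} and the extension result of Proposition \ref{prop restr iso} to build the $G$-action, then combine these with the projection formula and the limit description of $\DM(\cY,\QQ)$ to descend the scheme case from \cite[Theorem 2.13]{HPL_formula}. Concretely, for parts \emph{\ref{action}} and \emph{\ref{action-bc}}: the $G$-torsor structure on $f^\circ$ gives a monoid homomorphism $\alpha_{\fX^\circ}:G\to \End_{\hDM(\cY^\circ,\QQ)}(f^\circ_! f^{\circ!}\QQ_{\cY^\circ})$ by functoriality. Using Lemma \ref{lemma:base-change} i) to identify this endomorphism monoid with $\End_{\hDM(\cY^\circ,\QQ)}(j^*f_!f^!\QQ_\cY)$, Proposition \ref{prop restr iso} supplies a unique lift to $\alpha_\fX$. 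For the base-change compatibility, I would observe that both monoid homomorphisms $G\rightrightarrows \End_{\hDM(\cZ,\QQ)}(f_{\cZ!}f_\cZ^!\QQ_\cZ)$ in \emph{\ref{action-bc}} restrict over $\cZ^\circ := \cZ\times_\cY \cY^\circ$ to the action on $\fX^\circ\times_{\cY^\circ}\cZ^\circ$ coming from the $G$-torsor, using the $1$-categorical functoriality in Lemma \ref{lemma:base-change} ii) to match the two orders of base change; the uniqueness in Proposition \ref{prop restr iso} applied over $\cZ$ then forces global equality.

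For \emph{\ref{iso-up}}: since $\hDM(\cY,\QQ)$ is $\QQ$-linear and idempotent-complete, $(-)^G$ is given by the image of the idempotent $e_G := \frac{1}{|G|}\sum_{g\in G}g$, and hence is preserved by every $\QQ$-linear exact functor. The limit description $\DM(\cY,\QQ)\simeq \Lim_{u:Y\to\cY\in\Lis_\cY} \DM(Y,\QQ)$ means the proposed isomorphism can be tested after each smooth pullback $u^*$ with $Y\in\Lis_\cY$. Combining Lemma \ref{lemma:base-change} i) with part \emph{\ref{action-bc}} identifies $u^*(f_!f^!\QQ_\cY)^G$ with $(f_{Y!}f_Y^!\QQ_Y)^G$ compatibly with the map to $\QQ_Y$, reducing to the scheme case, which is \cite[Theorem 2.13]{HPL_formula}.

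For \emph{\ref{iso-base}}: factor $\pi_\fX = \pi_\cY \circ f$ to write $M(\fX)\simeq \pi_{\cY!}(f_!f^!\pi_\cY^!\QQ)$. Since $\cY$ is smooth, $\pi_\cY^!\QQ$ is componentwise a Tate twist of $\QQ_\cY$, hence $\otimes$-invertible, so tensoring \emph{\ref{iso-up}} with $\pi_\cY^!\QQ$ (on which $G$ acts trivially) gives $(f_!f^!\pi_\cY^!\QQ)^G \simeq \pi_\cY^!\QQ$. Applying $\pi_{\cY!}$, which commutes with $(-)^G$ by $\QQ$-linearity, yields $M(\fX)^G\simeq \pi_{\cY!}\pi_\cY^!\QQ = M(\cY)$, which also endows $M(\fX)$ with the claimed $G$-action.

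The principal obstacle is the $\infty$-categorical bookkeeping around the $G$-action and its descent compatibility across the smooth site $\Lis_\cY$, but this is finessed entirely by the discreteness statement of Proposition \ref{prop:end-discrete}: since all relevant endomorphism mapping spaces are discrete, we can argue throughout at the level of ordinary monoids in $\Set$ and avoid any homotopy-coherent group-action structure.
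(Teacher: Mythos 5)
Your proposal is correct and follows essentially the same route as the paper: parts \emph{\ref{action}} and \emph{\ref{action-bc}} are built by the same lift via Lemma \ref{lemma:base-change} and Proposition \ref{prop restr iso} (you phrase the base-change compatibility via uniqueness over $\cZ$ where the paper writes the corresponding chain of equalities of the $i_{(-)}$'s, but these are the same argument), and part \emph{\ref{iso-up}} is reduced in both cases to the scheme situation using the limit description and joint conservativity of the $u^*$ (where, as the paper notes, one should cite the \emph{proof} of \cite[Theorem 2.13]{HPL_formula} rather than its statement, which is phrased at the level of absolute motives). For part \emph{\ref{iso-base}} you spell out the $\pi_{\cY!}(-\otimes\pi_\cY^!\QQ)$ argument that the paper delegates to the proof of \cite[Theorem 2.13]{HPL_formula}; this is the same mechanism, and you are right that discreteness from Proposition \ref{prop:end-discrete} is what makes the whole $\infty$-categorical bookkeeping collapse to ordinary monoids in $\Set$.
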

\begin{proof}
  We start with the construction of the action in \emph{\ref{action}}. Since $G$ acts on $\fX^{\circ}$ over $\cY^{\circ}$, the functor
  \[
M_{\cY^{\circ}}:\Stk_{\cY^{\circ}}\to \hDM(\cY^{\circ},\QQ),\quad (h:\cY'\to \cY^{\circ})\mapsto h_{!}h^{!}\QQ_{\cY^\circ}
\]
induces an action of $G$ on $f^{\circ}_{!}f^{\circ!}\QQ_{\cY^\circ}$, thus a morphism
\[
\alpha':G\to \End_{\hDM(\cY^{\circ},\QQ)}(f^{\circ}_{!}f^{\circ!}\QQ_{\cY^\circ}).
\]
By Lemma \ref{lemma:base-change} i) and Proposition \ref{prop restr iso}, we have an isomorphism
\[
j^{*}:\End_{\hDM(\cY)}(f_{!}f^{!}\QQ_{\cY})\simeq \End_{\hDM(\cY^{\circ})}(f^{\circ}_{!}f^{\circ!}\QQ_{\cY^{\circ}})
\]
of sets and we define $\alpha_{\fX}$ as the composition
\[
G\stackrel{\alpha}{\to}\End_{\hDM(\cY^{\circ})}(f^{\circ}_{!}f^{\circ!}\QQ_{\cY^{\circ}})\stackrel{(j^{*})^{-1}}{\to}\End_{\hDM(\cY)}(f_{!}f^{!}\QQ_{\cY}).
\]
The fact that this construction is compatible with smooth base changes as claimed in \emph{\ref{action-bc}} then follows from the $1$-categorical functoriality of the isomorphism in Lemma \ref{lemma:base-change} ii), as we have 
\[i_u \circ (u^{\circ})^*i_j = i_{j \circ u^\circ} = i_{u \circ j_\cZ} = i_{j_\cZ} \circ (j_\cZ)^*i_u. \]

We now come to the main statement: part \emph{\ref{iso-up}}. By \cite[Eq.\ (A.4)]{khan-stacks}, the family of functors $u^{*}:\DM(\cY,\QQ)\to \DM(Y,\QQ)$ with $u:Y\to \cY$ varying across smooth morphisms from a smooth scheme $Y$ is jointly conservative. By Zariski descent, we can further assume that $Y$ is connected, separated and of finite type over $k$. By \emph{\ref{action-bc}}, this reduces \emph{\ref{iso-up}} to the case $\cY=Y$, which was proven in \cite[Theorem 2.13]{HPL_formula} (or rather, the proof in loc.\ cit.\ establishes \emph{\ref{iso-up}} in this case, even though the statement there was only about the analogue of \emph{\ref{iso-base}}).

Then \emph{\ref{iso-base}} follows by pushing forward \emph{\ref{iso-up}} to $\Spec(k)$, as in the proof of \cite[Theorem 2.13]{HPL_formula}.
\end{proof}

\section{The stack of torsion sheaves}

Following Laumon \cite[Section 3]{Laumon}, we consider the stack of full flags of torsion sheaves on $C$
\[ \TCoh_{0,d} := \langle 0 = T_0 \subset T_1 \subset \cdots \subset T_d = T :  T_i \in \Coh_{0,i} \rangle \]
which admits a forgetful map $\TCoh_{0,d}  \ra \Coh_{0,d}$ that Laumon showed is small and an $S_d$-torsor over the dense open set where the support of the torsion sheaf consists of $d$ distinct points \cite[Paragraph (3.2) and Theorem 3.3.1]{Laumon}. In fact, Laumon observed that if $C = \AA^1$, then this morphism coincides with the $\GL_d$-stack quotient of the Grothendieck-Springer resolution.

Consequently, on the level of (Betti or $\ell$-adic) rational cohomology, one has
\[ H^*(\Coh_{0,d}) \simeq H^*(\TCoh_{0,d})^{S_d}.\]
Since the associated graded map $\gr : \TCoh_{0,d}  \ra \prod_{i=1}^d \Coh_{0,1}$ is an iteration of vector bundle stacks (see Definition \ref{def vector bundle stack} below and \cite{Heinloth_coh}), whose fibres are contractible, one obtains
\[ H^*(\Coh_{0,d}) \simeq H^*(\TCoh_{0,d})^{S_d} \simeq H^*(\Pi_{i=1}^d \Coh_{0,1})^{S_d} \simeq \Sym^d(H^*(\Coh_{0,1}))\]
and, as the support map $\Coh_{0,1} \ra C$ is a $\GG_m$-gerbe (in fact, a trivial $\GG_m$-gerbe, as $C(k) \neq \emptyset$), one has $H^*(\Coh_{0,1}) \simeq H^*(C) \otimes H^*(B\GG_m) \simeq H^*(C) \otimes \QQ[z]$.

We want to perform a similar computation at the level of motives. First we need to understand vector bundle stacks.

\begin{defn}\label{def vector bundle stack}
Let $\fX$ be a stack and $d: \cE_0 \ra \cE_1$ be a homomorphism of vector bundles over $\fX$. Then the associated vector bundle stack is
\[ [\cE_1/\cE_0] \ra \fX. \]
\end{defn}

A formula for the class of vector bundle stacks in a dimensional completion of the Grothendieck ring of varieties is proved in \cite[Lemma 3.3]{GPHS}. For an analogous formula for Voevodsky motives, we give the following simple proof.

\begin{prop}\label{prop motive vb stack}
For a vector bundle stack $\cE\ra \fX$, we have $M(\cE) \simeq M(\fX)$.
\end{prop}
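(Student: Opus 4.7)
The strategy is to combine two basic properties of $M$ on stacks in Khan's formalism—smooth descent and homotopy invariance for vector bundles. Let $p:\cE := [\cE_1/\cE_0] \to \fX$ be the structure morphism; the canonical atlas $q:\cE_1 \to \cE$ is a smooth $\cE_0$-torsor, where $\cE_0$ acts on $\cE_1$ via $d$.

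First I would use smooth descent to express $M(\cE)$ as the geometric realization of the simplicial motive whose $n$-th term is $M$ of the $(n+1)$-fold fiber product $\cE_1 \times_\cE \cdots \times_\cE \cE_1$. Since $q$ is an $\cE_0$-torsor, this fiber product is canonically isomorphic as an $\fX$-scheme to $\cE_1 \times_\fX (\cE_0)^n$, which is itself a vector bundle over $\fX$ of rank $r_1 + n r_0$ (where $r_i = \rk \cE_i$). Iterated motivic homotopy invariance then provides isomorphisms $M(\cE_1 \times_\fX (\cE_0)^n) \simeq M(\fX)$ induced by the structure maps to $\fX$.

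The key observation to conclude is that every face and degeneracy morphism in the Čech simplicial diagram is a morphism of $\fX$-schemes. By functoriality of $M$ together with the previous identifications, each simplicial face and degeneracy becomes $\id_{M(\fX)}$. The diagram is therefore canonically equivalent to the constant one with value $M(\fX)$, whose geometric realization is $M(\fX)$ because $|\Delta^{\op}|$ is contractible.

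The main obstacle is making precise the smooth descent formula for $M$ used in the first step, since strictly speaking descent is formulated for the six-functor formalism rather than for the covariant motive functor. This can be derived from smooth descent for the stable $\infty$-category $\DM(-,\QQ)$ in Khan's framework together with base change; alternatively one can reduce the proposition to the relative statement $p_! p^! \QQ_\fX \simeq \QQ_\fX$ in $\DM(\fX,\QQ)$, since $p$ is smooth of virtual relative dimension $r_1 - r_0$ so that relative purity identifies $p^!\QQ_\fX$ with a Tate twist of $\QQ_\cE$, and the same descent-plus-homotopy-invariance argument can be run in the relative setting over $\fX$ before pushing forward to $\Spec(k)$.
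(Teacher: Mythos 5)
Your argument is correct, but it is a genuinely different proof from the one in the paper. The paper reduces immediately to the relative statement $p_!p^!\QQ_{\fX}\simeq\QQ_{\fX}$ in $\DM(\fX,\QQ)$, then uses the joint conservativity of pullback along smooth atlases \cite[Eq.\ (A.4)]{khan-stacks} to reduce to points $x\in\fX$; via base change and relative purity this localises the computation to the fibre $\cE_x\simeq[V_1/V_0]\simeq\AA^r\times B\GG_a^s$, whose motive is trivial because $M(\AA^1)\simeq\QQ$ and $M(B\GG_a)\simeq\QQ$ (the latter by the simplicial presentation of $B\GG_a$, which is levelwise $\AA^1$-contractible). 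Your approach instead runs Čech descent along the atlas $q:\cE_1\to[\cE_1/\cE_0]$ globally over $\fX$: you correctly observe that the $(n+1)$-fold fibre product is $\cE_1\times_{\fX}\cE_0^{\times n}$, a vector bundle over $\fX$, that the projections to $\fX$ form a map of simplicial objects, and that applying $M$ and levelwise homotopy invariance makes the Čech diagram equivalent to the constant one. In effect, the paper's treatment of $M(B\GG_a)$ is a degenerate, fibrewise instance of your Čech argument. Each approach has something to recommend it: the paper's conservativity-to-points reduction makes the homotopy-invariance input a statement about schemes only (where it is immediate), while your global descent avoids the detour through fibres but requires two points that you correctly flag as needing care — converting the contravariant descent of the sheaf categories into a covariant colimit formula for $M$, and knowing homotopy invariance for vector bundles over stacks (not just schemes). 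Both can be handled exactly as you indicate, either directly or by passing to the relative statement $p_!p^!\QQ_{\fX}\simeq\QQ_{\fX}$ and pushing forward, which is where your argument essentially rejoins the paper's.
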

\begin{proof}
We claim that in $\DM(\fX,\QQ)$, we have $M(\cE \ra \fX)\simeq \QQ_\fX$. To prove this, it suffices to check this isomorphism holds after pulling to all points of $ x \in \fX$; this is true for motives on schemes by \cite[Proposition 3.24]{ayoub_etale}, and then follows for motives of stacks because the family of functors $u^{*}:\DM(\fX,\QQ)\to \DM(X,\QQ)$ with $u:X\to \fX$ varying across smooth morphisms from a scheme $X$ is jointly conservative \cite[Eq.\ (A.4)]{khan-stacks}. Since $\cE\ra \fX$ is smooth, relative purity and base change imply that $x^{*}M(\cE\ra\fX)\simeq M(\cE_{x}\ra \Spec(\kappa(x)))$. However, 
\[ \cE_x \simeq [V_1/V_0] \simeq \AA^r \times B\GG_a^s \]
where $r = \dim(V_1) - \rk(d)$ and $s = \dim(\ker(d))$. The motive of $\AA^{1}$ is trivial, as is the motive of $B\GG_{a}$ because it coincides with the motive of the simplicial classifying space of $\GG_{a}$ which is levelwise $\AA^{1}$-contractible. Consequently, $M(\cE_x) \simeq \QQ_{\kappa(x)}$.
\end{proof}

We can now prove the first statement in Theorem \ref{main thm}.

\begin{thm}\label{thm torsion sheaves}
Assume that $C(k) \neq \emptyset$; then in $\DM(k,\QQ)$, we have
\[M(\Coh_{0,d}) \simeq  \Sym^d(M(\Coh_{0,1})) \simeq \Sym^d(M(C \times B\GG_m)). \]
\end{thm}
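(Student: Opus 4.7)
The plan is to apply Theorem \ref{thm small maps stacks} to Laumon's flag map $f:\TCoh_{0,d}\to\Coh_{0,d}$ and then to compute $M(\TCoh_{0,d})$ using Proposition \ref{prop motive vb stack} applied to the associated graded map $\gr:\TCoh_{0,d}\to\Coh_{0,1}^{d}$. The hypotheses of Theorem \ref{thm small maps stacks} hold: the stack $\Coh_{0,d}$ is smooth because coherent sheaves on a smooth curve have vanishing $\Ext^{2}$; the stack $\TCoh_{0,d}$ is smooth because $\gr$ realises it as an iterated vector bundle stack over the smooth stack $\Coh_{0,1}^{d}$; and the representability, properness, smallness together with the $S_{d}$-torsor structure on the dense open $\Coh_{0,d}^{\circ}\subset\Coh_{0,d}$ parametrising torsion sheaves supported at $d$ distinct points are Laumon's results in \cite{Laumon}.

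Granting this setup, iterating Proposition \ref{prop motive vb stack} along the tower defining $\gr$ yields $M(\TCoh_{0,d})\simeq M(\Coh_{0,1}^{d})\simeq M(\Coh_{0,1})^{\otimes d}$ (the second isomorphism being the K\"unneth formula applied to a product of smooth stacks), while Theorem \ref{thm small maps stacks}\emph{\ref{iso-base}} yields $M(\Coh_{0,d})\simeq M(\TCoh_{0,d})^{S_{d}}$. Combining these identifications and taking invariants of a permutation action then gives $M(\Coh_{0,d})\simeq(M(\Coh_{0,1})^{\otimes d})^{S_{d}}=\Sym^{d}(M(\Coh_{0,1}))$, and the final identification $\Coh_{0,1}\simeq C\times B\GG_{m}$ follows from the triviality of the $\GG_{m}$-gerbe $\Coh_{0,1}\to C$ under the assumption $C(k)\neq\emptyset$.

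The main obstacle is to verify that the isomorphism $M(\TCoh_{0,d})\simeq M(\Coh_{0,1})^{\otimes d}$ is equivariant, intertwining the $S_{d}$-action coming from Theorem \ref{thm small maps stacks} with the standard permutation action on the tensor product. This is nontrivial because $\gr$ itself is not $S_{d}$-equivariant at the stack level: there is no stack-level $S_{d}$-action on the full $\TCoh_{0,d}$ extending the torsor action on $\TCoh_{0,d}^{\circ}$, so one cannot deduce equivariance from functoriality of the vector bundle stack isomorphism. The strategy is to work with the relative motive $f_{!}f^{!}\QQ_{\Coh_{0,d}}$ in $\hDM(\Coh_{0,d},\QQ)$, where by Proposition \ref{prop:end-discrete} and Proposition \ref{prop restr iso} the endomorphism sets are discrete and restrict bijectively to those over $\Coh_{0,d}^{\circ}$. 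Using Theorem \ref{thm small maps stacks}\emph{\ref{action-bc}} to track base change compatibility, one reduces the equivariance check to the dense open $\Coh_{0,d}^{\circ}$, where the restricted graded map $\gr^{\circ}$ is in fact a genuine $S_{d}$-equivariant stack isomorphism from $\TCoh_{0,d}^{\circ}$ onto the open subset of $\Coh_{0,1}^{d}$ consisting of ordered $d$-tuples of pairwise distinct points, matching the torsor action on the left with permutation on the right. There the desired equivariance is tautological, and the discreteness of the relevant endomorphism spaces ensures that checking equality on this open suffices.
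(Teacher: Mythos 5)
Your overall strategy coincides with the paper's: apply the stack version of the small-map theorem to Laumon's flag map, reduce $M(\TCoh_{0,d})$ to $M(\Coh_{0,1}^{d})$ via the vector-bundle-stack proposition applied to $\gr$, take $S_d$-invariants, and finally use the triviality of the $\GG_m$-gerbe. The paper's own proof is in fact considerably terser than your write-up: it simply juxtaposes $M(\Coh_{0,d})\simeq M(\TCoh_{0,d})^{S_d}$ with $M(\TCoh_{0,d})\simeq M(\Coh_{0,1}^{d})$ and concludes. You are right to flag that this juxtaposition silently uses that the isomorphism induced by $\gr$ is $S_d$-equivariant, with $M(\TCoh_{0,d})$ carrying the action produced by Theorem~\ref{thm small maps stacks} and $M(\Coh_{0,1}^{d})$ the permutation action; this is not automatic, since $\gr$ is not $S_d$-equivariant at the stack level and there is no $S_d$-action on $\TCoh_{0,d}$ itself.

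However, the mechanism you propose to close this gap does not work as stated. The discreteness result (Proposition~\ref{prop:end-discrete}) and the restriction bijectivity (Proposition~\ref{prop restr iso}) are statements about $\End_{\DM(\Coh_{0,d})}(f_{!}f^{!}\QQ_{\Coh_{0,d}})$, i.e.\ they live in the category of motives \emph{over the base stack $\Coh_{0,d}$}. The equivariance you need to check, on the other hand, is an equality of two maps $M(\TCoh_{0,d})\rightrightarrows M(\Coh_{0,1}^{d})$ in $\hDM(k,\QQ)$, namely $M(\gr)\circ\sigma_{\fX}$ versus $\sigma_{\mathrm{perm}}\circ M(\gr)$. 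This lives in a $\Hom$-group over $\Spec(k)$ for which no discreteness or restriction-injectivity has been established, and indeed $\Hom_{\hDM(k,\QQ)}(M(\TCoh_{0,d}),M(\Coh_{0,1}^{d}))$ is large. One cannot transport the comparison to $\hDM(\Coh_{0,d},\QQ)$ either: the square relating $f:\TCoh_{0,d}\to\Coh_{0,d}$ and $\gr:\TCoh_{0,d}\to\Coh_{0,1}^{d}$ does not commute over $\Coh_{0,d}$ (the associated graded of a flag is not the total sheaf), so $M(\gr)$ is not a morphism of objects of $\DM(\Coh_{0,d},\QQ)$ to which Proposition~\ref{prop restr iso} applies, and Theorem~\ref{thm small maps stacks}~\emph{\ref{action-bc}} concerns base change along smooth $u:\cZ\to\cY$, which $\gr$ is not. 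What you actually get from restricting to the dense open is that $M(\gr)\circ\sigma_{\fX}$ and $\sigma_{\mathrm{perm}}\circ M(\gr)$ agree \emph{after precomposition} with $M(\TCoh_{0,d}^{\circ})\to M(\TCoh_{0,d})$; since this map is not an epimorphism in $\hDM(k,\QQ)$, this is strictly weaker than what is needed. You would need a different argument at this point (for instance some compatibility of the vector-bundle-stack trivialisation with the $S_d$-structure, or a weight/purity argument to reduce the check further), rather than an appeal to the discreteness of endomorphisms over $\Coh_{0,d}$.
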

\begin{proof}
By applying Theorem \ref{thm small maps stacks} to the forgetful map $\TCoh_{0,d}  \ra \Coh_{0,d}$, we have
\[ M(\Coh_{0,d}) \simeq M(\TCoh_{0,d})^{S_d}. \]
We apply Proposition \ref{prop motive vb stack} to  the associated graded map $\gr : \TCoh_{0,d}  \ra \prod_{i=1}^d \Coh_{0,1}$ to conclude
\[ M(\Coh_{0,d}) \simeq M(\TCoh_{0,d})^{S_d} \simeq M(\Pi_{i=1}^d \Coh_{0,1})^{S_d} \simeq \Sym^d(M(\Coh_{0,1})). \]
Finally, we have $M(\Coh_{0,1}) \simeq M(C \times B\GG_m) \simeq \bigoplus_{i \geq 0} M(C)\{ i\}$ as $\Coh_{0,1} \ra C$ is a trivial $\GG_m$-gerbe, because $C(k)\neq \emptyset$.
\end{proof}

\section{The stack of coherent sheaves}\label{sec:coh}
 
As in the cohomological description of Heinloth \cite{Heinloth_coh}, we use the torsion stratification on $\Coh_{n,d}$ to compute its motive. The key difference is that in the cohomological description the splitting of the associated Gysin long exact sequences uses the usual Atiyah--Bott method \cite{AB}, whereas in the motivic setting, to prove the corresponding Gysin distinguished triangles split, we use purity of the motives appearing. This purity can be expressed using the weight structure in the sense of Bondarko. However we do not really need to know anything about this weight structure, just about its heart. Here is the relevant definition.

\begin{defn}\label{def:pure}
The subcategory $\DM(k,\QQ)^{w=0}$ of \emph{pure} (or \emph{weight zero}) objects in $\DM(k,\QQ)$ is the full subcategory of $\DM(k,\QQ)$ containing motives of smooth projective varieties and stable by coproducts and direct factors.
\end{defn}

As the notation suggests, $\DM(k,\QQ)^{w=0}$ is the heart of the Chow weight structure on $\DM(k,\QQ)$, see \cite[Theorem 2.1.1 II]{Bondarko-rel}.

\begin{lemma}\label{lemma:pure-split}
Let $M\to N\to P\to M[1]$ be a distinguished triangle in $\DM(k,\QQ)$. Assume that $M$ and $P$ are pure objects. Then the distinguished triangle splits and $N$ is also pure.
\end{lemma}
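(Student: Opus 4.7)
The plan is to derive everything from the vanishing of the connecting morphism. Writing the distinguished triangle as $M\to N\to P\stackrel{\partial}{\to}M[1]$, I will show that the class $\partial\in \Hom_{\DM(k,\QQ)}(P,M[1])$ is zero, which forces the triangle to split as $N\simeq M\oplus P$. Once this is established, the purity of $N$ is immediate from Definition \ref{def:pure}: the pure subcategory is closed under coproducts, so a direct sum of two pure objects is pure.

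The whole lemma therefore reduces to the single statement $\Hom_{\DM(k,\QQ)}(P,M[1])=0$ whenever $M$ and $P$ are pure. The most economical way to obtain this is to invoke the Chow weight structure on $\DM(k,\QQ)$ recalled just before the lemma: by \cite[Theorem 2.1.1 II]{Bondarko-rel}, $\DM(k,\QQ)^{w=0}$ is precisely its heart, and one of the defining axioms of any weight structure asserts that $\Hom(X,Y[n])=0$ for $X,Y$ in the heart and $n>0$. Taking $X=P$, $Y=M$ and $n=1$ delivers the desired vanishing.

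A more hands-on alternative is to start from the generators: for smooth projective $X$ and $Y$, the identification of motivic $\Hom$ groups with Bloch's higher Chow groups gives
\[
\Hom_{\DM(k,\QQ)}(M(Y),M(X)[1])\simeq \CH^{\dim X}(Y\times X,-1)_\QQ,
\]
which vanishes because Bloch's cycle complex $z^{\bullet}(-,*)$ lives in non-negative simplicial degrees. The main potential obstacle in this self-contained route is the extension of the vanishing from Chow motive generators to arbitrary pure objects, since $\DM(k,\QQ)^{w=0}$ is obtained by closing Chow motives under both coproducts \emph{and} direct factors. This can be handled using compactness of $M(Y)$ for $Y$ smooth projective in order to commute $\Hom$ with coproducts in the second variable, combined with the fact that $\Hom$ always turns coproducts in the first variable into products; however, precisely this kind of bookkeeping is what Bondarko's construction is designed to absorb, so in practice I would simply cite \cite{Bondarko-rel}.
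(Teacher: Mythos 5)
Your proposal is correct, and your ``hands-on alternative'' is essentially the paper's own argument. The paper reduces to showing $\Hom_{\DM(k,\QQ)}(P,M[1])=0$ and establishes it by the direct computation you describe: for $X,Y$ smooth projective with $Y$ equidimensional of dimension $d$, Poincar\'e duality and the representability of higher Chow groups give
\[
\Hom_{\DM(k,\QQ)}(M(X),M(Y)[1])\simeq \CH^{d}(X\times_{k}Y,-1)\otimes\QQ=0,
\]
and then the vanishing for general pure $M,P$ follows from Definition \ref{def:pure} (the paper glosses over the passage to coproducts and direct factors that you explicitly flag; your proposed handling via compactness of $M(Y)$ for $Y$ smooth projective, to commute $\Hom$ past coproducts in the target, is exactly the missing bookkeeping). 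Your preferred route --- citing the orthogonality axiom of Bondarko's Chow weight structure from \cite[Theorem 2.1.1 II]{Bondarko-rel} --- is a legitimate shortcut: the paper mentions this theorem only to identify $\DM(k,\QQ)^{w=0}$ with the heart, but one could indeed invoke the axiom $\Hom(X,Y[n])=0$ for $X,Y$ in the heart and $n>0$ and be done. This trades self-containedness for brevity, since Bondarko's construction of the weight structure relies on the very same negative higher Chow group vanishing that the paper computes directly; both arguments bottom out in the same fact.
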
  
\begin{proof}
  We have to prove that the morphism $P\to M[1]$ is $0$. This follows from the definition of pure objects and  the observation that, if $X$ and $Y$ are smooth projective varieties over $k$, with $Y$ assumed equidimensional of dimension $d$ without loss of generality, we have
  \[
\Hom_{\DM(k,\QQ)}(M(X),M(Y)[1])=\Hom_{\DM(k,\QQ)}(M(X\times_{k}Y),\QQ(d)[2d+1])=\CH^{d}(X\times_{k}Y,-1)\otimes\QQ=0
\]
by Poincaré duality and the representability of higher Chow groups in $\DM(k,\QQ)$.
\end{proof}
  
Since every coherent sheaf $E$ on $C$ has a torsion filtration $E_{\mathrm{tor}} \hookrightarrow E \twoheadrightarrow E_{\mathrm{free}}$, one obtains a stratification by the degree $e$ of the torsion piece:
\[ \Coh_{n,d} = \bigsqcup_{e \geq 0} \Coh_{n,d}^{\deg(\mathrm{tor}) =e} \]
with $\Coh_{n,d}^{\deg(\mathrm{tor}) =0} = \Bun_{n,d}$. Furthermore, we have associated graded maps
\[ \gr : \Coh_{n,d}^{\deg(\mathrm{tor}) =e} \ra \Coh_{0,e} \times \Bun_{n,d-e} \]
which are vector bundle stacks. 

We are now able to prove the final statement in Theorem \ref{main thm}.

\begin{thm}\label{thm coh positive rank}
Assume that $C(k) \neq \emptyset$. Then in $\DM(k,\QQ)$ for $n >0$, we have
\[ M(\Coh_{n,d}) \simeq M(\Jac(C)) \otimes M(B\GG_m) \otimes \bigotimes_{i=1}^{\infty} Z(C, \QQ\{i\}).\]
\end{thm}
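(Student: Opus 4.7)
The plan is to stratify $\Coh_{n,d}$ by the torsion degree and split the resulting Gysin triangles using purity, following Heinloth's cohomological strategy but with Lemma \ref{lemma:pure-split} replacing his Atiyah--Bott-type splitting. Set $\Coh_{n,d}^{\leq e} := \bigsqcup_{e' \leq e} \Coh_{n,d}^{\deg(\mathrm{tor})=e'}$, an ascending chain of smooth quasi-compact open substacks whose union is $\Coh_{n,d}$. The closed complement $\Coh_{n,d}^{\deg(\mathrm{tor})=e} \subset \Coh_{n,d}^{\leq e}$ is smooth of codimension $ne$, because via the associated graded map $\gr$ it is a vector bundle stack over the smooth stack $\Coh_{0,e} \times \Bun_{n,d-e}$ whose fibres $[\Ext^1(F,T)/\Hom(F,T)]$ have virtual dimension $-\chi(F,T)=-ne$ by Riemann--Roch. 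The associated Gysin distinguished triangle reads
\[ M(\Coh_{n,d}^{\leq e-1}) \to M(\Coh_{n,d}^{\leq e}) \to M(\Coh_{n,d}^{\deg(\mathrm{tor})=e})\{ne\} \to M(\Coh_{n,d}^{\leq e-1})[1]. \]

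By Proposition \ref{prop motive vb stack} applied to $\gr$, $M(\Coh_{n,d}^{\deg(\mathrm{tor})=e}) \simeq M(\Coh_{0,e}) \otimes M(\Bun_{n,d-e})$. Combining Theorem \ref{thm torsion sheaves} with Theorem \ref{main thm}(2), both factors are direct sums of Tate twists of motives of products of $\Jac(C)$ and symmetric powers of $C$, hence pure in the sense of Definition \ref{def:pure}; so too is the Tate twist by $\{ne\}$. Starting from the pure base case $M(\Coh_{n,d}^{\leq 0}) = M(\Bun_{n,d})$ and applying Lemma \ref{lemma:pure-split} inductively, every Gysin triangle splits and
\[ M(\Coh_{n,d}^{\leq e}) \simeq \bigoplus_{e' = 0}^{e} M(\Coh_{0,e'}) \otimes M(\Bun_{n,d-e'})\{ne'\}. \]
Since the formula in Theorem \ref{main thm}(2) is manifestly independent of the degree, each $M(\Bun_{n,d-e'})$ is isomorphic to $M(\Bun_{n,d})$ and factors out uniformly. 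Passing to the colimit as $e \to \infty$ (using that $\pi_!$ is a left adjoint, and that $\QQ_{\Coh_{n,d}} \simeq \colim_e j_{e!} j_e^* \QQ_{\Coh_{n,d}}$ along the ascending union) then yields
\[ M(\Coh_{n,d}) \simeq M(\Bun_{n,d}) \otimes \bigoplus_{e \geq 0} M(\Coh_{0,e})\{ne\}. \]

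It remains to identify this countable sum with the upper tail $\bigotimes_{i=n}^{\infty} Z(C, \QQ\{i\})$ of the motivic zeta function. By Theorem \ref{thm torsion sheaves}, $M(\Coh_{0,e}) \simeq \Sym^e(\bigoplus_{j \geq 0} M(C)\{j\})$. Since $\QQ\{n\}=\QQ(n)[2n]$ has even shift, $S_e$ acts trivially on $\QQ\{n\}^{\otimes e}$ and the Tate twist can be absorbed into the symmetric power:
\[ M(\Coh_{0,e})\{ne\} \simeq \Sym^e\Big(\bigoplus_{i \geq n} M(C)\{i\}\Big). \]
Summing over $e$ and applying $\Sym^*(V\oplus W) \simeq \Sym^*(V)\otimes \Sym^*(W)$ together with $M(\Sym^j C) \simeq \Sym^j M(C)$ in $\DM(k,\QQ)$ identifies
\[ \bigoplus_{e \geq 0} M(\Coh_{0,e})\{ne\} \simeq \bigotimes_{i=n}^{\infty} \Sym^*(M(C)\{i\}) = \bigotimes_{i=n}^{\infty} Z(C,\QQ\{i\}), \]
the infinite tensor product being interpreted via the colimit convention stipulated after Theorem \ref{main thm}. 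Combined with the formula $M(\Bun_{n,d}) \simeq M(\Jac(C)) \otimes M(B\GG_m) \otimes \bigotimes_{i=1}^{n-1} Z(C,\QQ\{i\})$ from Theorem \ref{main thm}(2), this assembles the target expression.

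The main obstacle should be the colimit bookkeeping: checking carefully that $M(\Coh_{n,d}) \simeq \colim_e M(\Coh_{n,d}^{\leq e})$ in $\DM(k,\QQ)$ (necessary because $\Coh_{n,d}$ is not of finite type) and, on the algebraic side, matching the ascending union of strata with the colimit presentation used for $\bigotimes_{i=1}^{\infty} Z(C,\QQ\{i\})$ in Theorem \ref{main thm} — a cofinality comparison verifying that the transition maps on both sides correspond under the claimed isomorphism.
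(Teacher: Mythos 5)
Your proof is correct and takes essentially the same route as the paper's: torsion stratification of $\Coh_{n,d}$, identification of each stratum as a vector bundle stack via Proposition \ref{prop motive vb stack}, Gysin triangles split by purity via Lemma \ref{lemma:pure-split}, and the same zeta-function bookkeeping combined with Theorem \ref{main thm}(1) and (2). You are somewhat more explicit than the paper about the colimit over the quasi-compact opens $\Coh_{n,d}^{\leq e}$ and about absorbing the Tate twist $\{ne\}$ into $\Sym^e$, but these are exactly the steps the paper carries out more tersely.
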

\begin{proof}
Since the graded map on each stratum in the torsion stratification is a vector bundle stack, we have
\[ M( \Coh_{n,d}^{\deg(\mathrm{tor}) =e} ) \simeq M( \Coh_{0,e}) \otimes M( \Bun_{n,d-e})\]
by Proposition \ref{prop motive vb stack}. Associated to the smooth closed pair $\Coh_{n,d}^{\deg(\mathrm{tor}) \leq e} \hookrightarrow \Coh_{n,d}^{\deg(\mathrm{tor}) =e}$ of codimension $ne$, there is a Gysin distinguished triangle
\[ M(\Coh_{n,d}^{\deg(\mathrm{tor}) <e}) \ra M(\Coh_{n,d}^{\deg(\mathrm{tor}) \leq e}) \ra M(\Coh_{n,d}^{\deg(\mathrm{tor}) =e})\{ ne \} \stackrel{+1}{\ra}\]
which we claim splits inductively. Indeed, as $M(\Coh_{n,d}^{\deg(\mathrm{tor}) =e})\{ ne \}$ is pure by the first two parts of Theorem \ref{main thm}, one can inductively show that $M(\Coh_{n,d}^{\deg(\mathrm{tor}) <e})$ is pure and this sequence splits by Lemma \ref{lemma:pure-split}. Consequently, we have
 \begin{align*}
M(\Coh_{n,d}) \: & \simeq \bigoplus_{e \geq 0} M( \Coh_{n,d}^{\deg(\mathrm{tor}) =e} )\{ ne \} \simeq \bigoplus_{e \geq 0} M( \Bun_{n,d-e})  \otimes M(\Coh_{0,e})\{ne \}
 \\
&\simeq M(\Bun_{n,d}) \otimes \left( \bigoplus_{e \geq 0} \Sym^e(M(C)\{n\} \otimes B\GG_m) \right) \\
&\simeq M(\Bun_{n,d}) \otimes \bigotimes_{k \geq n} Z(C,\QQ\{k \}) \\
&\simeq M(\Jac(C)) \otimes M(B\GG_m) \otimes \bigotimes_{i=1}^{\infty} Z(C, \QQ\{i\}),
\end{align*}
by the first two parts of Theorem \ref{main thm} and the fact that $ M(\Bun_{n,d})$ is independent of $d$.
\end{proof}

\bibliographystyle{amsplain}
\bibliography{references}

\medskip \medskip

\noindent{Radboud University, IMAPP, PO Box 9010,
6525 AJ Nijmegen, Netherlands} 

\medskip \noindent{\texttt{v.hoskins@math.ru.nl, simon.pepin.lehalleur@gmail.com}}

\end{document}